\date{\today}
\newtheorem{theorem}{Theorem}[section]
\newtheorem{lemma}[theorem]{Lemma}
\newtheorem{corollary}[theorem]{Corollary}
\theoremstyle{definition}
\newtheorem{definition}[theorem]{Definition}
\newtheorem{example}[theorem]{Example}
\newtheorem{remark}[theorem]{Remark}
\newcommand{\ot}{\otimes}
\newcommand{\co}{\circ}
\title[Modules over invertible 1-cocycles]{Modules over invertible 1-cocycles}
\title{Modules over invertible 1-cocycles} 
\begin{document}
	
	\maketitle
	
	\begin{center}
	{\bf Jos\'e Manuel Fern\'andez Vilaboa$^{1}$, Ram\'on
	Gonz\'{a}lez Rodr\'{\i}guez$^{2}$, Brais Ramos P\'erez$^{3}$ and Ana Bel\'en Rodr\'{\i}guez Raposo$^{4}$}.
	\end{center}
	
	\vspace{0.4cm}
	\begin{center}
	{\small $^{1}$ [https://orcid.org/0000-0002-5995-7961].}
	\end{center}
	\begin{center}
	{\small  CITMAga, 15782 Santiago de Compostela, Spain.}
	\end{center}
	\begin{center}
	{\small  Universidade de Santiago de Compostela. Departamento de Matem\'aticas,  Facultade de Matem\'aticas, E-15771 Santiago de Compostela, Spain. 
	\\ email: josemanuel.fernandez@usc.es.}
	\end{center}
	\vspace{0.2cm}
	
	\begin{center}
	{\small $^{2}$ [https://orcid.org/0000-0003-3061-6685].}
	\end{center}
	\begin{center}
	{\small  CITMAga, 15782 Santiago de Compostela, Spain.}
	\end{center}
	\begin{center}
	{\small  Universidade de Vigo, Departamento de Matem\'{a}tica Aplicada II,  E. E. Telecomunicaci\'on,
	E-36310  Vigo, Spain.
	\\email: rgon@dma.uvigo.es}
	\end{center}


   \begin{center}
   	{\small $^{3}$ [https://orcid.org/0009-0006-3912-4483].}
   \end{center}
	\begin{center}
	{\small  CITMAga, 15782 Santiago de Compostela, Spain. \\}
	\end{center}
    \begin{center}
	{\small  Universidade de Santiago de Compostela. Departamento de Matem\'aticas,  Facultade de Matem\'aticas, E-15771 Santiago de Compostela, Spain. 
	\\email: braisramos.perez@usc.es}
	\end{center}
	\vspace{0.2cm}
	
	\begin{center}
	{\small $^{4}$ [https://orcid.org/0000-0002-8719-5159]}
	\end{center}
	\begin{center}
	{\small Universidade de Santiago de Compostela, Departamento de Did\'acticas Aplicadas, Facultade C. C. Educaci\'on, E-15782 Santiago de Compostela, Spain.
	\\email: anabelen.rodriguez.raposo@usc.es}
	\end{center}

\begin{abstract}  
 In this paper we introduce in a braided setting the notion of left module for an invertible 1-cocycle and we prove some categorical equivalences between categories of modules associated to an invertible 1-cocycle and categories of modules, in the sense of \cite{RGON}, associated to Hopf braces. 
 \end{abstract} 

\vspace{0.2cm}

{\sc Keywords}: Braided monoidal category, Hopf algebra,  Hopf brace,  invertible 1-cocycles,  module. 

{\sc MSC2020}: 18M05, 16T05, 18G45, 16S40.

\section*{Introduction}

Hopf braces were introduced recently  in \cite{AGV} as the linearisation of skew braces given in \cite{GV}. A skew brace consists of two different group structures $(T, \diamond)$ and $(T, \circ)$ on the same set $T$ and that satisfy $\forall\; a, b, c\in T$ the compatibility condition
$$a\circ (b\diamond c)=(a\circ b)\diamond a^{\diamond}\diamond (a\circ c),$$
where $a^{\diamond}$ denotes the inverse with respect to $\diamond$. Thus, a Hopf brace consists on two structures  of Hopf algebras defined on the same object that share a common coalgebra structure and satisfying the same compatibility condition that generalizes the previous identity.  The relevance of these structures comes from the fact that provide solutions of the Yang-Baxter equation and, as was pointed in \cite{AGV}, they give the right setting for considering left symmetric algebras as Lie-theoretical analogs of the notion of brace introduced by W. Rump in \cite{Rump}. On the other hand, it is also interesting to note that there exists a deep link between Hopf braces and  invertible 1-cocycles. In fact, the category of Hopf braces  is equivalent to the category of invertible 1-cocycles. Thus, invertible 1-cocycles are nothing more than coalgebra isomorphisms between Hopf algebras that share the underlying coalgebra and related by a module algebra structure. 

As long as we are dealing with Hopf type structures, we are somehow forced to have a deep look of their categories of modules for a complete overview. For example, in \cite{RGON} the author introduces the category of left modules for a Hopf brace  in order to prove Fundamental Theorem of Hopf modules (see, for example \cite{Abe} and  \cite{SW}) in the Hopf brace setting. This notion of module for a Hopf brace  is weaker than the one introduced by H. Zhu in \cite{Zhu} and in the cocommutative setting both notions are equivalent. The main difference between the two definitions is the following: The Hopf brace with the two associated products is an example of module in the sense of \cite{RGON} while with the definition proposed by Zhu that property only holds when the underlying object of the Hopf brace endowed with a particular action  is an object belonging to a  class of cocommutativity in the sense of \cite{CCH}. In other words, under certain circumstances, for example, the lack of cocommutativity, the category of left modules over a Hopf brace introduced by Zhu may not contain the trivial object as it happens always in the case of Hopf algebras.

Therefore  the main objective of this article is to find the appropriate notion of module associated to an invertible 1-cocycle that allows to extend the previous categorical equivalence  between  Hopf braces and invertible 1-cocycles to their categories of  modules, taking into account that for us the notion of module associated with a Hopf brace is the one  introduced in \cite{RGON}. 

The paper is organized as follows: The first section contains the basic notions that we will need in the rest of the paper and the main results  are in the second section. More concretely, working in a braided setting, in Section 2 we introduce the notion of module for an invertible 1-cocycle and the category of these objects (see Definition \ref{def-pi-module}). After doing this we prove some functorial results and we show that under symmetry and cocommutativity conditions the category of modules associated to an invertible 1-cocycle is symmetric monoidal (see Theorem \ref{th-mon-cat}). Finally, in Theorems 
\ref{th-mon-cat21},  \ref{pHpi} and Corollary \ref{crpHpi} we obtain  the desired categorical equivalences between categories of modules associated to an invertible 1-cocycle and categories of modules associated to a Hopf brace. 

\section{Preliminaries}

Throughout this paper ${\sf  C}$ denotes a strict braided monoidal category with tensor product $\ot$, unit object $K$ and braiding $c$. Recall that a monoidal category is a category ${\sf  C}$ together with a functor $\ot :{\sf  C}\times {\sf  C}\rightarrow {\sf  C}$, called tensor product, an object $K$ of ${\sf C}$, called the unit object, and  families of natural isomorphisms 
$$a_{M,N,P}:(M\ot N)\ot P\rightarrow M\ot (N\ot P),\;\;\;r_{M}:M\ot K\rightarrow M, \;\;\; l_{M}:K\ot M\rightarrow M,$$
in ${\sf  C}$, called  associativity, right unit and left unit constraints, respectively, satisfying the Pentagon Axiom and the Triangle Axiom, i.e.,
$$a_{M,N, P\ot Q}\co a_{M\ot N,P,Q}= (id_{M}\ot a_{N,P,Q})\co a_{M,N\ot P,Q}\co (a_{M,N,P}\ot id_{Q}),$$
$$(id_{M}\ot l_{N})\co a_{M,K,N}=r_{M}\ot id_{N},$$
where for each object $X$ in ${\sf  C}$, $id_{X}$ denotes the identity morphism of $X$ (see  \cite{Mac}). A monoidal category is called strict if the constraints of the previous paragraph are identities. It is a well-known  fact (see for example \cite{Christian}) that every non-strict monoidal category is monoidal equivalent to a strict one. This lets us to treat monoidal categories as if they were strict and, as a consequence, the results proved in a strict setting hold for every non-strict  monoidal category, for example the category ${\mathbb F}$-{\sf Vect} of vector spaces over a field ${\mathbb F}$, or the category $R$-{\sf Mod} of left modules over a commutative ring $R$.
For
simplicity of notation, given objects $M$, $N$, $P$ in ${\sf  C}$ and a morphism $f:M\rightarrow N$, in most cases we will write $P\ot f$ for $id_{P}\ot f$ and $f \ot P$ for $f\ot id_{P}$.

A braiding for a strict monoidal category ${\sf  C}$ is a natural family of isomorphisms 
$$c_{M,N}:M\ot N\rightarrow N\ot M$$ subject to the conditions 
$$
c_{M,N\ot P}= (N\ot c_{M,P})\co (c_{M,N}\ot P),\;\;
c_{M\ot N, P}= (c_{M,P}\ot N)\co (M\ot c_{N,P}).
$$

A strict braided monoidal category ${\sf  C}$ is a strict monoidal category with a braiding.  Note that, as a consequence of the definition, the equalities $c_{M,K}=c_{K,M}=id_{M}$ hold, for all object  $M$ of ${\sf  C}$.  If the braiding satisfies that  $c_{N,M}\co c_{M,N}=id_{M\ot N},$ for all $M$, $N$ in ${\sf  C}$, we will say that ${\sf C}$  is symmetric. In this case, we call the braiding $c$ a symmetry for the category ${\sf  C}$.

\begin{definition}
{\rm 
An algebra in ${\sf  C}$ is a triple $A=(A, \eta_{A},
\mu_{A})$ where $A$ is an object in ${\sf  C}$ and
 $\eta_{A}:K\rightarrow A$ (unit), $\mu_{A}:A\otimes A
\rightarrow A$ (product) are morphisms in ${\sf  C}$ such that
$\mu_{A}\circ (A\otimes \eta_{A})=id_{A}=\mu_{A}\circ
(\eta_{A}\otimes A)$, $\mu_{A}\circ (A\otimes \mu_{A})=\mu_{A}\circ
(\mu_{A}\otimes A)$. Given two algebras $A= (A, \eta_{A}, \mu_{A})$
and $B=(B, \eta_{B}, \mu_{B})$, a morphism  $f:A\rightarrow B$ in {\sf  C} is an algebra morphism if $\mu_{B}\circ (f\otimes f)=f\circ \mu_{A}$, $ f\circ
\eta_{A}= \eta_{B}$. 

If  $A$, $B$ are algebras in ${\sf  C}$, the tensor product
$A\otimes B$ is also an algebra in
${\sf  C}$ where
$\eta_{A\otimes B}=\eta_{A}\otimes \eta_{B}$ and $\mu_{A\otimes
	B}=(\mu_{A}\otimes \mu_{B})\circ (A\otimes c_{B,A}\otimes B).$
}
\end{definition}

\begin{definition}
{\rm 
A coalgebra  in ${\sf  C}$ is a triple ${D} = (D,
\varepsilon_{D}, \delta_{D})$ where $D$ is an object in ${\sf
C}$ and $\varepsilon_{D}: D\rightarrow K$ (counit),
$\delta_{D}:D\rightarrow D\otimes D$ (coproduct) are morphisms in
${\sf  C}$ such that $(\varepsilon_{D}\otimes D)\circ
\delta_{D}= id_{D}=(D\otimes \varepsilon_{D})\circ \delta_{D}$,
$(\delta_{D}\otimes D)\circ \delta_{D}=
 (D\otimes \delta_{D})\circ \delta_{D}.$ If ${D} = (D, \varepsilon_{D},
 \delta_{D})$ and
${ E} = (E, \varepsilon_{E}, \delta_{E})$ are coalgebras, a morphism 
$f:D\rightarrow E$ in  {\sf  C} is a coalgebra morphism if $(f\otimes f)\circ
\delta_{D} =\delta_{E}\circ f$, $\varepsilon_{E}\circ f
=\varepsilon_{D}.$ 

Given  $D$, $E$ coalgebras in ${\sf  C}$, the tensor product $D\otimes E$ is a
coalgebra in ${\sf  C}$ where $\varepsilon_{D\otimes
E}=\varepsilon_{D}\otimes \varepsilon_{E}$ and $\delta_{D\otimes
E}=(D\otimes c_{D,E}\otimes E)\circ( \delta_{D}\otimes \delta_{E}).$
}
\end{definition}

\begin{definition}
	{\rm 
 Let ${D} = (D, \varepsilon_{D},
\delta_{D})$ be a coalgebra and let $A=(A, \eta_{A}, \mu_{A})$ be an
algebra. By ${\mathcal  H}(D,A)$ we denote the set of morphisms
$f:D\rightarrow A$ in ${\sf  C}$. With the convolution operation
$f\ast g= \mu_{A}\circ (f\otimes g)\circ \delta_{D}$, ${\mathcal  H}(D,A)$ is an algebra where the unit element is $\eta_{A}\circ \varepsilon_{D}=\varepsilon_{D}\otimes \eta_{A}$.
}
\end{definition}

\begin{definition}

 Let  $A$ be an algebra. The pair
$(M,\varphi_{M})$ is a left $A$-module if $M$ is an object in
${\sf  C}$ and $\varphi_{M}:A\otimes M\rightarrow M$ is a morphism
in ${\sf  C}$ satisfying $\varphi_{M}\circ(
\eta_{A}\ot M)=id_{M}$, $\varphi_{M}\circ (A\ot \varphi_{M})=\varphi_{M}\circ
(\mu_{A}\ot M)$. Given two left ${A}$-modules $(M,\varphi_{M})$
and $(N,\varphi_{N})$, $f:M\rightarrow N$ is a morphism of left
${A}$-modules if $\varphi_{N}\circ (A\ot f)=f\circ \varphi_{M}$.  

The  composition of morphisms of left $A$-modules is a morphism of left $A$-modules. Then left $A$-modules form a category that we will denote by $\;_{\sf A}${\sf Mod}.

\end{definition}

\begin{definition}
{\rm 
We say that $H$ is a
bialgebra  in ${\sf  C}$ if $(H, \eta_{H}, \mu_{H})$ is an
algebra, $(H, \varepsilon_{H}, \delta_{H})$ is a coalgebra, and
$\varepsilon_{H}$ and $\delta_{H}$ are algebra morphisms
(equivalently, $\eta_{H}$ and $\mu_{H}$ are coalgebra morphisms). Moreover, if there exists a morphism $\lambda_{H}:H\rightarrow H$ in ${\sf  C}$,
called the antipode of $H$, satisfying that $\lambda_{H}$ is the inverse of $id_{H}$ in ${\mathcal  H}(H,H)$, i.e., 
\begin{equation}
\label{antipode}
id_{H}\ast \lambda_{H}= \eta_{H}\circ \varepsilon_{H}= \lambda_{H}\ast id_{H},
\end{equation}
we say that $H$ is a Hopf algebra.

}
\end{definition}

If $H$ is a Hopf algebra,  the antipode is antimultiplicative and anticomultiplicative 
$$
\lambda_{H}\co \mu_{H}=  \mu_{H}\co (\lambda_{H}\ot \lambda_{H})\co c_{H,H},\;\;\;\; \delta_{H}\co \lambda_{H}=c_{H,H}\co (\lambda_{H}\ot \lambda_{H})\co \delta_{H}, 
$$
and leaves the unit and counit invariant, i.e., 
$$
\lambda_{H}\co \eta_{H}=  \eta_{H},\;\; \varepsilon_{H}\co \lambda_{H}=\varepsilon_{H}.
$$

A morphism of Hopf algebras is an algebra-coalgebra morphism. Note that, if $f:H\rightarrow D$ is a Hopf algebra morphism the following equality holds:
$$
\lambda_{D}\co f=f\co \lambda_{H}.
$$

With the composition of morphisms in {\sf C} we can define a category whose objects are  Hopf algebras  and whose morphisms are morphisms of Hopf algebras. We denote this category by ${\sf  Hopf}$.

A Hopf algebra is commutative if $\mu_{H}\co c_{H,H}=\mu_{H}$ and cocommutative if $c_{H,H}\co \delta_{H}=\delta_{H}.$ It is easy to see that in both cases $\lambda_{H}\circ \lambda_{H} =id_{H}$.

\begin{definition}
Let $D$ be a Hopf algebra. An algebra $B$  is said to be a left $D$-module algebra if $(B, \phi_{B})$ is a left $D$-module and $\eta_{B}$, $\mu_{B}$ are morphisms of left $D$-modules, i.e.,
$$
		\phi_{B}\circ (D\otimes \eta_{B})=\varepsilon_{D}\otimes \eta_{B},\;\;\phi_{B}\circ (D\otimes \mu_{B})=\mu_{B}\circ \phi_{B\otimes B},
$$
	where  $\phi_{B\otimes B}=(\phi_{B}\otimes \phi_{B})\circ (D\otimes c_{D,B}\otimes B)\circ (\delta_{D}\otimes B\otimes B)$ is the left action on $B\otimes B$. 
\end{definition}

\section{Modules over invertible $1$-cocycles and Hopf braces}
We begin the main section of this paper by defining the notion of invertible 1-cocycle in the braided monoidal category ${\sf C}$.  This definition can be directly generalized from the one given for the symmetric setting, for example in the category of vector spaces over a field  ${\mathbb F}$ (see \cite{AGV}).

\begin{definition}
Let $A$, $H$ be  Hopf algebras in {\sf C}. Let's assume that $H$ is a left $A$-module algebra with action $\phi_{H}$. Let $\pi:A\rightarrow H$ be a coalgebra morphism. We will say that $\pi$ is an invertible 1-cocycle if it is an isomorphism such that 
\begin{equation}
\label{1-c}
\pi\circ \mu_{A}=\mu_{H}\circ (\pi \otimes \phi_{H})\circ (\delta_{A}\otimes \pi)
\end{equation}
holds.	
	
Let $\pi:A\rightarrow H$ and $\tau:B\rightarrow D$ be invertible 1-cocycles. A morphism between them is a pair 

$$(f,g)\;:\; \begin{array}{cc} \;\;\;\;A\\ \pi\; \downarrow \\ \;\;\; \;H \end{array}\;\;\;\longrightarrow \; \begin{array}{cc} \;\;\;\;\;\;B\\ \tau\; \downarrow \\ \;\;\; \;\;\;D \end{array}$$
where $f:A\rightarrow B$ and $g:H\rightarrow D$ are algebra-coalgebra morphisms satisfying the following identities:
\begin{equation}
\label{1-c2}
g\circ \pi=\tau\circ f,
\end{equation}
\begin{equation}
\label{1-c3}
g\circ \phi_{H}=\phi_{D}\circ (f\otimes g).
\end{equation}

Then, with these morphisms, invertible 1-cocycles form a category denoted by {\sf IC}. Note that 
$
\pi\circ \eta_{A}=\eta_{H}
$
holds (see \cite{AGV}). 

\end{definition}

\begin{remark}
\label{remH}
It is easy to see that there exists a  functorial connection between the categories {\sf Hopf} and {\sf IC} given by the following: If $A$ is a Hopf algebra, $(A,t_{A}=\varepsilon_{A}\otimes A)$ is a left $A$-module algebra. Then, $id_{A}:A\rightarrow A$ is an object in {\sf IC}. On the other hand, if $f:A\rightarrow B$ is a morphism of Hopf algebras, we have that the pair $(f,f)$ is a morphism in {\sf IC} between $id_{A}:A\rightarrow A$  and $id_{B}:B\rightarrow B$. Therefore, there exists a functor $${\sf H}:{\sf Hopf}\rightarrow {\sf IC}$$ defined on objects by 
$${\sf H}(A)= \begin{array}{cc} \;\;\;\;\;\;\;A\\ id_{A}\; \downarrow \\ \;\;\; \;\;\;\;A \end{array},$$
where the action is $t_{A}=\varepsilon_{A}\otimes A$ (the trivial action), and on morphisms by ${\sf H}(f)=(f,f)$.
\end{remark}
 
As was pointed in \cite{AGV} there exists a closed relation between the Hopf theoretical generalization of skew braces, called Hopf braces, and invertible 1-cocycles in the category of vector spaces over a field 
${\Bbb F}$. In the braided setting we have the same relation and the definition of Hopf brace is the following:

\begin{definition}
\label{H-brace}
{\rm Let $H=(H, \varepsilon_{H}, \delta_{H})$ be a coalgebra in {\sf C}. Let's assume that there are two algebra structures $(H, \eta_{H}^1, \mu_{H}^1)$, $(H, \eta_{H}^2, \mu_{H}^2)$ defined on $H$ and suppose that there exist two endomorphisms of $H$ denoted by $\lambda_{H}^{1}$ and $\lambda_{H}^{2}$. We will say that 
$$(H, \eta_{H}^{1}, \mu_{H}^{1}, \eta_{H}^{2}, \mu_{H}^{2}, \varepsilon_{H}, \delta_{H}, \lambda_{H}^{1}, \lambda_{H}^{2})$$
is a Hopf brace in {\sf C} if:
\begin{itemize}
\item[(i)] $H_{1}=(H, \eta_{H}^{1}, \mu_{H}^{1},  \varepsilon_{H}, \delta_{H}, \lambda_{H}^{1})$ is a Hopf algebra in {\sf C}.
\item[(ii)] $H_{2}=(H, \eta_{H}^{2}, \mu_{H}^{2},  \varepsilon_{H}, \delta_{H}, \lambda_{H}^{2})$ is Hopf algebra in {\sf C}.
\item[(iii)] The  following equality holds:
$$\mu_{H}^{2}\co (H\ot \mu_{H}^{1})=\mu_{H}^{1}\co (\mu_{H}^{2}\ot \Gamma_{H_{1}} )\co (H\ot c_{H,H}\ot H)\co (\delta_{H}\ot H\ot H),$$
\end{itemize}
where  $$\Gamma_{H_{1}}=\mu_{H}^{1}\co (\lambda_{H}^{1}\ot \mu_{H}^{2})\co (\delta_{H}\ot H).$$

Following \cite{RGON}, a Hopf brace will be denoted by ${\mathbb H}=(H_{1}, H_{2})$ or  in a simpler way by ${\mathbb H}$.

If  ${\mathbb H}$ is a Hopf brace in {\sf C}, we will say that ${\mathbb H}$ is cocommutative if $\delta_{H}=c_{H,H}\circ \delta_{H}$, i.e., $H_{1}$ and $H_{2}$ are cocommutative Hopf algebras in {\sf C}.

}
\end{definition}

The previous definition is the general notion of Hopf brace in a braided monoidal setting. If we restrict it to a category of Yetter-Drinfeld modules over a Hopf algebra which antipode is an isomorphism we obtain the definition of braided Hopf brace introduced by H. Zhu and Z. Ying in \cite[Definition 2.1]{Zhu2}.

\begin{definition}
{\rm  Given two Hopf braces ${\mathbb H}$  and  ${\mathbb B}$ in {\sf C}, a morphism $x$ in {\sf C} between the two underlying objects is called a morphism of Hopf braces if both $x:H_{1}\rightarrow B_{1}$ and $x:H_{2}\rightarrow B_{2}$ are algebra-coalgebra morphisms, i.e., Hopf algebra morphisms.
		
Hopf braces together with morphisms of Hopf braces form a category which we denote by {\sf HBr}. This category is a subcategory of the category of Hopf trusses introduced by T. Brzez\'niski in \cite{BRZ1}. 
		
}
\end{definition}

\begin{theorem}
	There exists a functor between the categories {\sf Hopf} and {\sf HBr}.
\end{theorem}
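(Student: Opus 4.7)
The plan is to mimic the construction in Remark \ref{remH}: given a Hopf algebra $H$ in ${\sf C}$, I would produce the ``trivial'' Hopf brace obtained by taking both Hopf algebra structures on $H$ to be the same. Concretely, define
$${\sf K}(H)=(H,\eta_{H},\mu_{H},\eta_{H},\mu_{H},\varepsilon_{H},\delta_{H},\lambda_{H},\lambda_{H}),$$
so that $H_{1}=H_{2}=H$. Conditions (i) and (ii) of Definition \ref{H-brace} are then immediate. For a morphism $f\colon H\rightarrow B$ in {\sf Hopf}, define ${\sf K}(f)=f$; since the two algebra-coalgebra structures on ${\sf K}(H)$ and ${\sf K}(B)$ coincide with those of $H$ and $B$, respectively, $f$ is automatically a morphism of Hopf braces. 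Functoriality (preservation of identities and composition) is then obvious from the fact that ${\sf K}$ acts as the identity on morphisms.

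The only non-trivial step is verifying condition (iii) of Definition \ref{H-brace}. First I would compute
$$\Gamma_{H_{1}}=\mu_{H}\co (\lambda_{H}\ot \mu_{H})\co (\delta_{H}\ot H)=\mu_{H}\co (\mu_{H}\ot H)\co (\lambda_{H}\ot H\ot H)\co (\delta_{H}\ot H),$$
and use the antipode identity \eqref{antipode} together with associativity to conclude $\Gamma_{H_{1}}=\varepsilon_{H}\ot H$. With this reduction, the right-hand side of (iii) becomes
$$\mu_{H}\co (\mu_{H}\ot (\varepsilon_{H}\ot H))\co (H\ot c_{H,H}\ot H)\co (\delta_{H}\ot H\ot H).$$
Next I would apply naturality of $c$ with respect to $\varepsilon_{H}$ in the form $(H\ot \varepsilon_{H})\co c_{H,H}=\varepsilon_{H}\ot H$, which allows one to push the counit past the braiding. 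Combined with the counit axiom $(\varepsilon_{H}\ot H)\co \delta_{H}=id_{H}$, the right-hand side collapses to $\mu_{H}\co (\mu_{H}\ot H)$, which agrees with $\mu_{H}\co (H\ot \mu_{H})$ by associativity of $\mu_{H}$. This is exactly the left-hand side of (iii).

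The main (mild) obstacle is the bookkeeping in this last calculation: one has to apply the braiding's naturality in the correct place, since the middle strand carries both a coproduct of the first variable and a counit from $\Gamma_{H_{1}}$. Using the graphical calculus for braided monoidal categories, or simply the identity $(H\ot \varepsilon_{H})\co c_{H,H}=\varepsilon_{H}\ot H$ (which follows from $c_{K,H}=id$), makes this step routine. Once (iii) is established, the definition of ${\sf K}$ on morphisms needs no further check, and functoriality is immediate, finishing the construction of the functor ${\sf K}\colon {\sf Hopf}\rightarrow {\sf HBr}$.
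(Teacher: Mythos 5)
Your proposal is correct and follows exactly the paper's construction: the trivial Hopf brace ${\mathbb H}_{triv}=(H,\eta_{H},\mu_{H},\eta_{H},\mu_{H},\varepsilon_{H},\delta_{H},\lambda_{H},\lambda_{H})$ on objects and $x\mapsto x$ (as a Hopf brace morphism) on morphisms. Your verification of condition (iii), via $\Gamma_{H_{1}}=\varepsilon_{H}\otimes H$ and the naturality identity $(H\otimes\varepsilon_{H})\circ c_{H,H}=\varepsilon_{H}\otimes H$, simply supplies details the paper leaves implicit.
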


\begin{proof}
	If $H$ is a Hopf algebra, ${\mathbb H}_{triv}=(H, \eta_{H}, \mu_{H}, \eta_{H},\mu_{H}, \varepsilon_{H}, \delta_{H}, \lambda_{H}, \lambda_{H})$ is an object in {\sf HBr}. On the other hand, if $x:H\rightarrow B$ is a morphism of Hopf algebras, we have that the pair $(x,x)$ is a morphism in {\sf HBr} between ${\mathbb H}_{triv}$  and ${\mathbb B}_{triv}$. Therefore, there exists a functor $${\sf H}^{\prime}:{\sf Hopf}\rightarrow {\sf HBr}$$ defined on objects by ${\sf H}^{\prime}(H)={\mathbb H}_{triv}$ and on morphisms by ${\sf H}^{\prime}(x)=(x,x)$.
\end{proof}

Let ${\mathbb H}$  be a Hopf brace in {\sf C}. Then 
$$
\eta_{H}^{1}=\eta_{H}^2, 
$$
holds and, by \cite[Lemma 1.7]{AGV},  in this braided setting  the equality
\begin{equation}
\label{agv1}
\Gamma_{H_{1}}\circ (H\otimes \lambda_{H}^1)=\mu_{H}^{1}\circ ((\lambda_{H}^1\circ \mu_{H}^{2})\otimes H)\circ (H\otimes c_{H,H}) \circ (\delta_{H}\otimes H)
\end{equation}
also holds. Moreover, in our braided context \cite[Lemma 1.8]{AGV} and \cite[Remark 1.9]{AGV} hold and then we have that $(H,\eta_{H}^{1}, \mu_{H}^{1})$ is a left $H_{2}$-module algebra with action $\Gamma_{H_{1}}$ and $\mu_{H}^2$ admits the following expression:
\begin{equation}
\label{eb2}
\mu_{H}^2=\mu_{H}^{1}\circ (H\otimes \Gamma_{H_{1}})\circ (\delta_{H}\otimes H). 
\end{equation}

Now, taking into account that every Hopf brace is an example of Hopf truss, by \cite[Theorem 6.4]{BRZ1}, we have that $(H,\eta_{H}^{1}, \mu_{H}^{1})$ also is a left $H_{2}$-module algebra with action 
$$\Gamma_{H_{1}}^{\prime}=\mu_{H}^{1}\circ (\mu_{H}^{2}\otimes \lambda_{H}^{1})\circ (H\otimes c_{H,H})\circ (\delta_{H}\otimes H)$$
because the symmetry  is not needed in the proof as in the case of $\Gamma_{H_{1}}$.

Finally, by the naturality of $c$ and the coassociativity of $\delta_{H}$, we obtain that 
$$
\mu_{H}^{1}\co (\mu_{H}^{2}\ot \Gamma_{H_{1}} )\co (H\ot c_{H,H}\ot H)\co (\delta_{H}\ot H\ot H)
$$
$$=\mu_{H}^{1}\co (\Gamma_{H_{1}}^{\prime}\otimes  \mu_{H}^{2})\co (H\ot c_{H,H}\ot H)\co (\delta_{H}\ot H\ot H)$$
and then (iii) of Definition \ref{H-brace} is equivalent to 
$$
\mu_{H}^{2}\co (H\otimes \mu_{H}^{1})
=\mu_{H}^{1}\co (\Gamma_{H_{1}}^{\prime}\otimes  \mu_{H}^{2})\co (H\ot c_{H,H}\ot H)\co (\delta_{H}\ot H\ot H).
$$
Therefore, the equality 
\begin{equation}
\label{iii-eq1}
\mu_{H}^{2}
=\mu_{H}^{1}\co (\Gamma_{H_{1}}^{\prime}\otimes  H)\co (H\ot c_{H,H})\co (\delta_{H}\ot H)
\end{equation}
holds.

\begin{remark}
\label{dphi}
Note that by \cite[Lemma 2.5, Lemma 2.7]{Proj23}, if ${\mathbb H}$  is a cocommutative  Hopf brace in {\sf C}, we obtain that the morphisms  $\Gamma_{H_{1}}$ and $\Gamma_{H_{1}}^{\prime}$ are coalgebra morphisms.
\end{remark}

As was proved in \cite[Theorem 1.12]{AGV}, the category of invertible 1-cocycles associated to  a fixed  Hopf algebra is equivalent to the category of Hopf braces  where the first Hopf algebra structure is the same one fixed before. This categorical equivalence remains valid for general invertible 1-cocycles and Hopf braces in braided monoidal categories.

\begin{theorem}
\label{1-th}
The categories {\sf IC} and {\sf HBr} are equivalent.
\end{theorem}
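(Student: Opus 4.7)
The plan is to define functors $F\colon{\sf IC}\to{\sf HBr}$ and $G\colon{\sf HBr}\to{\sf IC}$ that are inverse to each other up to natural isomorphism. On one side, $G$ is given on objects by sending a Hopf brace ${\mathbb H}=(H_{1},H_{2})$ to the invertible 1-cocycle $\mathrm{id}_{H}\colon H_{2}\to H_{1}$, with the $H_{2}$-module algebra structure on $H_{1}$ provided by $\Gamma_{H_{1}}$. This is well defined because $(H,\eta_{H}^{1},\mu_{H}^{1})$ is an $H_{2}$-module algebra with that action (as recalled in the discussion preceding Remark \ref{dphi}), because $H_{1}$ and $H_{2}$ share the same coalgebra so $\mathrm{id}_{H}$ is trivially a coalgebra isomorphism, and because the 1-cocycle identity \eqref{1-c} coincides with \eqref{eb2}. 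On morphisms, $G(x):=(x,x)$; conditions \eqref{1-c2}--\eqref{1-c3} hold because $x$ is a Hopf algebra morphism for both underlying structures and $\Gamma$ is defined from those structures.

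Conversely, $F$ is given on objects by transporting the Hopf algebra structure of $A$ onto $H$ along $\pi$: we put $H_{1}:=H$ and equip $H$ with $\eta_{H}^{2}:=\pi\circ\eta_{A}$, $\mu_{H}^{2}:=\pi\circ\mu_{A}\circ(\pi^{-1}\otimes\pi^{-1})$, and antipode $\lambda_{H}^{2}:=\pi\circ\lambda_{A}\circ\pi^{-1}$, keeping the coalgebra structure of $H$. Since $\pi$ is a coalgebra isomorphism and $A$ is a Hopf algebra, $H_{2}$ is a Hopf algebra, and $\eta_{H}^{1}=\eta_{H}^{2}$ follows from $\pi\circ\eta_{A}=\eta_{H}$ recalled in the definition of an invertible 1-cocycle. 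The key verification is the brace compatibility (iii) of Definition \ref{H-brace}. Setting $\xi:=\phi_{H}\circ(\pi^{-1}\otimes H)$, the identity \eqref{1-c} combined with $\delta_{H}\circ\pi=(\pi\otimes\pi)\circ\delta_{A}$ yields
$$\mu_{H}^{2}=\mu_{H}\circ(H\otimes\xi)\circ(\delta_{H}\otimes H);$$
substituting this into $\Gamma_{H_{1}}=\mu_{H}\circ(\lambda_{H}\otimes\mu_{H}^{2})\circ(\delta_{H}\otimes H)$ and collapsing with \eqref{antipode} and counitality gives $\Gamma_{H_{1}}=\xi$. Since $\xi$ is the transport along $\pi$ of the left $A$-module algebra structure of $H$, it makes $H_{1}$ into a left $H_{2}$-module algebra; expanding both sides of (iii) using the displayed formula for $\mu_{H}^{2}$, the module-algebra axiom for $\xi$, and $\Gamma_{H_{1}}=\xi$, they reduce to the same morphism after invoking naturality of $c$ and coassociativity of $\delta_{H}$. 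On morphisms, $F(f,g):=g$: it is already an algebra-coalgebra morphism $H_{1}\to D_{1}$ by hypothesis, and combining \eqref{1-c2} with $f$ being an algebra morphism gives $g\circ\mu_{H}^{2}=\mu_{D}^{2}\circ(g\otimes g)$.

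The equality $FG=\mathrm{Id}_{{\sf HBr}}$ holds on the nose, since transporting $H_{2}$ along $\mathrm{id}_{H}$ returns $H_{2}$. For $GF\cong\mathrm{Id}_{{\sf IC}}$, the pair $(\pi,\mathrm{id}_{H})$ supplies an isomorphism from $(\pi\colon A\to H,\phi_{H})$ to $(\mathrm{id}_{H}\colon H_{2}\to H_{1},\Gamma_{H_{1}})$: by construction $\pi\colon A\to H_{2}$ is an algebra-coalgebra isomorphism, \eqref{1-c2} is tautological, and \eqref{1-c3} reduces to $\Gamma_{H_{1}}\circ(\pi\otimes H)=\phi_{H}$, which is exactly the identity $\xi\circ(\pi\otimes H)=\phi_{H}$. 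Naturality is a direct check from the definitions of morphisms in ${\sf IC}$ and ${\sf HBr}$. The main obstacle will be axiom (iii) in the braided (as opposed to symmetric) setting, where the braidings $c_{H,H}$ appearing in the formulas for $\Gamma_{H_{1}}$ and for the module-algebra action on $H_{1}\otimes H_{1}$ must be tracked carefully; everything else reduces to routine diagrammatic manipulations using naturality of $c$, coassociativity of $\delta_{H}$, and the module and module-algebra axioms transported along $\pi$.
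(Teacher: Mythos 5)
Your proposal is correct and follows essentially the same route as the paper: the same pair of functors (transporting the Hopf structure of $A$ onto $H$ along $\pi$ in one direction, sending ${\mathbb H}$ to $id_{H}:H_{2}\to H_{1}$ with action $\Gamma_{H_{1}}$ in the other), with the equivalence witnessed on the nose in one composite and by $(\pi,id_{H})$ together with the identity $\phi_{H}=\Gamma_{H_{1}}\circ(\pi\otimes H)$ in the other. The only difference is that you spell out the verification of axiom (iii) via $\Gamma_{H_{1}}=\phi_{H}\circ(\pi^{-1}\otimes H)$, a detail the paper delegates to the Hopf truss reference \cite{GONROD}.
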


\begin{proof}
The proof follows as in \cite{GONROD} for Hopf trusses. In the following lines, we give a brief summary of this proof to introduce some notation  and  for the convenience of the reader.

Let ${\mathbb H}$ be an object in {\sf HBr}. Then, $id_{H}:H_{2}\rightarrow H_{1}$ is an invertible 1-cocycle. Also, if ${\mathbb H}$ and 
${\mathbb H}^{\prime}$ are objects in {\sf HBr} and  $x:{\mathbb H}\rightarrow {\mathbb H}^{\prime}$ is a morphism between them, the pair $(x,x)$ is a morphism in {\sf IC} between $id_{H}:H_{2}\rightarrow H_{1}$ and $id_{H^{\prime}}:H^{\prime}_{2}\rightarrow H^{\prime}_{1}$. Therefore, there exists a functor ${\sf E}:\;${\sf HBr}$ \rightarrow $ {\sf IC} defined on objects by $${\sf E}({\mathbb H})= \begin{array}{cc} \;\;\;\;\;\;\;H_2\\ id_{H}\; \downarrow \\ \;\;\; \;\;\;\;\;H_1 \end{array}, $$ where $\phi_{H_{1}}=\Gamma_{H_{1}}$, and on morphisms by ${\sf E}(x)=(x,x).$

Conversely,  let  $\pi:A\rightarrow H$ be an object in {\sf IC}. Define $\mu_{H_{\pi}}:=\pi\circ\mu_{A}\circ (\pi^{-1}\otimes \pi^{-1})$,  $\eta_{H_{\pi}}:=\eta_{H}$ and $\lambda_{H_{\pi}}=\pi\circ \lambda_{A}\circ \pi^{-1}$. Then, 
if we denote by $H_{\pi}$ the algebra $(H, \eta_{H_{\pi}},\mu_{H_{\pi}})$, $$(H,\eta_{H}, \mu_{H}, \eta_{H_{\pi}},\mu_{H_{\pi}}, \varepsilon_{H}, \delta_{H}, \lambda_{H}, \lambda_{H_{\pi}})$$  is an object in {\sf HBr} that we will denote by 
${\mathbb H}_{\pi}=(H, H_{\pi})$. Moreover, if $(f,g)$ is a morphism in {\sf IC} between $\pi:A\rightarrow H$ and $\pi^{\prime}:A^{\prime}\rightarrow H^{\prime}$, the  morphism $g$ is a morphism in {\sf HBr} between ${\mathbb H}_{\pi}$ and ${\mathbb H}^{\prime}_{_{\pi^{\prime}}}$. As a consequence of these facts, we have a functor ${\sf Q}:\;{\sf IC} \rightarrow  {\sf HBr}$ defined by 
$${\sf Q}(\begin{array}{cc} \;\;\;\;A\\ \pi\; \downarrow \\ \;\;\; \;H \end{array})={\mathbb H}_{\pi}$$  on objects and by 
${\sf Q}((f,g))=g$ on morphisms. 

The functors induce an equivalence between the two categories because, clearly, ${\sf QE}={\sf id}_{{\sf HBr}}$ and, on the other hand,  ${\sf EQ}\backsimeq {\sf id}_{{\sf IC}}$ because, if $\Gamma_{H}=\mu_{H}\co (\lambda_{H}\ot \mu_{H_{\pi}})\co (\delta_{H}\ot H)$, 
\begin{equation}
\label{gamphi}
\phi_{H}=\Gamma_{H}\circ (\pi\otimes H)
\end{equation}
holds and 
$$(\pi, id_{H}):\begin{array}{cc} \;\;\;\;A\\ \pi\; \downarrow \\ \;\;\; \;H \end{array} \;\;\;\longrightarrow \;  \begin{array}{cc} \;\;\;\;\;\;\;H_{\pi}\\ id_{H}\; \downarrow \\ \;\;\; \;\;\;\;\; H \end{array}={\sf EQ}(\begin{array}{cc} \;\;\;\;A\\ \pi\; \downarrow \\ \;\;\; \;H \end{array})$$
is an isomorphism in {\sf IC}.

\end{proof}

\begin{lemma}
Let $\pi:A\rightarrow H$ be an object in {\sf IC} with action $\phi_H$. Then 
$$
\phi_H\circ (A\ot (\lambda_H\circ\pi))
$$
$$ = \mu_H\circ ((\lambda_H\circ \mu_H)\ot H)\circ  (\pi\ot\phi_H\ot \pi)\circ (\delta_A\ot H\ot A)\circ (A\ot c_{A,H})\circ (\delta_A\ot \pi).$$
\end{lemma}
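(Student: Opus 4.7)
The strategy is to interpret both sides as convolution inverses of a common morphism and then to conclude by uniqueness. Set $u:=\phi_H\co(A\ot \pi):A\ot A\to H$, $v:=\phi_H\co(A\ot (\lambda_H\co\pi))$ (the left-hand side of the identity), and let $w$ denote the right-hand side. I will show that both $v$ and $w$ are convolution inverses of $u$ in the convolution algebra $\mathcal{H}(A\ot A,H)$, where $A\ot A$ carries its tensor coalgebra structure $\delta_{A\ot A}=(A\ot c_{A,A}\ot A)\co(\delta_A\ot \delta_A)$; uniqueness of inverses then forces $v=w$.

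That $v$ is a two-sided convolution inverse of $u$ follows from the module-algebra axioms combined with the fact that $\lambda_H\co\pi$ is the convolution inverse of $\pi$ in $\mathcal{H}(A,H)$, which is immediate since $\pi$ is a coalgebra morphism. To show that $w$ is a right convolution inverse of $u$, I apply the 1-cocycle identity~(\ref{1-c}) in two places. First, the inner piece $\mu_H\co(\pi\ot\phi_H)\co(\delta_A\ot\pi)$ appearing inside $w$ is exactly $\pi\co\mu_A$, so $w$ unfolds (up to a trailing copy of $\pi$ on one leg) as $\lambda_H\co\pi\co\mu_A$ evaluated on appropriate coproduct factors of $a\ot b$. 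Second, expanding the factor $u(a\ot b)=\phi_H(a\ot\pi(b))$ by means of~(\ref{1-c}) produces a $\pi\co\mu_A$ that meets the $\lambda_H\co\pi\co\mu_A$ coming from $w$ in the middle of $u\ast w$. These two cancel, because $\pi\co\mu_A:A\ot A\to H$ is itself a coalgebra morphism and therefore admits $\lambda_H\co\pi\co\mu_A$ as its convolution inverse in $\mathcal{H}(A\ot A,H)$. What remains collapses under the counit axiom applied to the resulting iterated coproduct of $a$, and a final use of $(\lambda_H\ast id_H)\co\pi=\eta_H\co\varepsilon_A$ delivers $u\ast w=\eta_H\co\varepsilon_{A\ot A}$, as required.

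The principal technical difficulty is the bookkeeping of braidings in the non-symmetric setting: the module-algebra axiom for the induced action on $H\ot H$, the tensor coproduct on $A\ot A$, and the antimultiplicativity $\lambda_H\co\mu_H=\mu_H\co(\lambda_H\ot\lambda_H)\co c_{H,H}$ each introduce occurrences of $c$, and by naturality these must be matched with the explicit $c_{A,H}$ that already appears inside $w$. In a symmetric category all of these braidings reduce to plain transpositions and the manipulation is essentially immediate; in the genuinely braided case the verification is a careful diagram chase controlled by the naturality of $c$ and the hexagon axioms.
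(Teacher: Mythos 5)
Your argument is correct, but it takes a genuinely different route from the paper's. The paper proves the identity by transporting it to the Hopf brace ${\mathbb H}_{\pi}$ produced by the functor ${\sf Q}$ of Theorem \ref{1-th}: it first rewrites the right-hand side as $\mu_H\co((\lambda_H\co\pi\co\mu_A)\ot\pi)\co(A\ot c_{A,A})\co(\delta_A\ot A)$ using (\ref{1-c}), recognises $\lambda_H\co\mu_{H_{\pi}}$ there, applies the imported brace identity (\ref{agv1}) to ${\mathbb H}_{\pi}$, and concludes with $\phi_H=\Gamma_H\co(\pi\ot H)$, i.e. (\ref{gamphi}). You instead stay inside the convolution algebra ${\mathcal H}(A\ot A,H)$ and get the identity from uniqueness of convolution inverses of $u=\phi_H\co(A\ot\pi)$: the module-algebra axioms show the left-hand side is a two-sided inverse of $u$, while (\ref{1-c}), used both to recognise $\pi\co\mu_A$ inside the right-hand side and to rewrite $u$ as $\mu_H\co((\lambda_H\co\pi)\ot(\pi\co\mu_A))\co(\delta_A\ot A)$, reduces $u\ast w$ to the convolution $(\pi\co\mu_A)\ast(\lambda_H\co\pi\co\mu_A)=\eta_H\co\varepsilon_{A\ot A}$, valid because $\pi\co\mu_A$ is a coalgebra morphism on the braided tensor coalgebra $A\ot A$. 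This is self-contained: in effect you reprove the needed instance of \cite[Lemma 1.7]{AGV} instead of quoting it through ${\mathbb H}_{\pi}$, at the cost of the braided bookkeeping you defer. That bookkeeping does close: after pushing all comultiplications to the top, the only discrepancy between your factorisation of $u\ast w$ and the literal composite is the order of two crossings involving disjoint strands (the last leg of the iterated coproduct of the first argument against the second leg of $\delta_A$ of the second argument, versus the two middle legs), and these commute by functoriality of $\ot$, so naturality of $c$ plus the counit axiom finish it exactly as you say. In summary, both proofs are valid; the paper's is shorter because it leans on the already-established Hopf-brace machinery, while yours isolates the statement as a purely convolution-theoretic fact about invertible 1-cocycles.
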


\begin{proof} The proof is the following: 

\begin{itemize}
\item[ ]$\hspace{0.38cm}\mu_H\circ ((\lambda_H\circ \mu_H)\ot H)\circ  (\pi\ot\phi_H\ot \pi)\circ (\delta_A\ot H\ot A)\circ (A\ot c_{A,H})\circ (\delta_A\ot \pi)$
\item [ ]$=\mu_{H}\circ ((\lambda_{H}\circ \pi\circ \mu_{A})\otimes \pi)\circ (A\otimes c_{A,A})\circ (\delta_{A}\otimes A) $ {\scriptsize (by naturality of $c$ and (\ref{1-c}))}
\item [ ]$=\mu_{H}\circ ((\lambda_{H}\circ  \mu_{H}^{\pi})\otimes H)\circ (H\otimes c_{H,H})\circ ((\delta_{H}\circ \pi)\otimes \pi) $ {\scriptsize (by naturality of $c$, the condition of coalgebra}
\item[ ]$\hspace{0.38cm}${\scriptsize  isomorphism for $\pi$ and the definition of $\mu_{H}^{\pi}$)}
\item [ ]$=\Gamma_{H}\circ (\pi\otimes (\lambda_{H}\circ \pi)) $ {\scriptsize (by (\ref{agv1}) for ${\mathbb H}_{\pi}$)}
\item [ ]$= \phi_{H}\circ (A\otimes (\lambda_{H}\circ \pi))$ {\scriptsize (by (\ref{gamphi})).}
\end{itemize}	
\end{proof}

\begin{theorem}
\label{phi-bar-phi}
Let $A$ and $H$ be Hopf algebras in {\sf C}, let $\pi:A\to H$ be an isomorphism of coalgebras such that $\pi\circ \eta_A = \eta_H$, and let's assume that $H$ is a left $A$-module algebra with action $\phi_H:A\ot H\to H$. Then the following are equivalent:
\begin{enumerate}
\item[(i)] The morphism $\pi:A\to H$ is an invertible 1-cocycle.
\item[(ii)] The pair $(H,\phi_H^{\prime}) $ is a left $A$-module algebra, where \[\phi_H^{\prime} = \mu_H\circ (\mu_H\ot H)\circ (\pi\ot \phi_H\ot (\lambda_H\circ \pi))\circ (\delta_A\ot c_{A,H})\circ (\delta_A\ot H),\]
and moreover 
\begin{equation}\label{mu-pi-barphi}
\pi\circ \mu_A = \mu_H\circ (\phi_H^{\prime}\ot \pi)\circ (A\ot c_{A,H})\circ (\delta_A\ot \pi)
\end{equation} 
holds.
\end{enumerate}
\end{theorem}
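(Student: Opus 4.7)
The proof proceeds by establishing both implications, each exploiting the equivalence of Theorem~\ref{1-th} between {\sf IC} and {\sf HBr} together with the Hopf brace identities developed for Theorem~\ref{1-th}.

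For the implication (i) $\Rightarrow$ (ii), the plan is to transport the right-adjoint-type action $\Gamma_{H_{1}}^{\prime}$ along $\pi$. Given that $\pi$ is an invertible 1-cocycle, Theorem~\ref{1-th} yields the Hopf brace ${\mathbb H}_{\pi}=(H,H_{\pi})$ with $\mu_{H}^{\pi}=\pi\circ\mu_{A}\circ(\pi^{-1}\otimes\pi^{-1})$, and $\pi\colon A\to H_{\pi}$ becomes an algebra-coalgebra morphism. The central claim is the identification
\[
\phi_{H}^{\prime}=\Gamma_{H_{1}}^{\prime}\circ(\pi\otimes H),
\]
where $\Gamma_{H_{1}}^{\prime}$ is the action making $(H,\eta_{H},\mu_{H})$ a left $H_{\pi}$-module algebra (discussed before~(\ref{iii-eq1})). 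To verify it, I would unfold $\Gamma_{H_{1}}^{\prime}=\mu_{H}\circ(\mu_{H}^{\pi}\otimes\lambda_{H})\circ(H\otimes c_{H,H})\circ(\delta_{H}\otimes H)$, push $\pi$ past $\delta_{H}$ (using that $\pi$ is a coalgebra morphism) and past $c_{H,H}$ (using naturality of $c$), and then invoke the equivalent form $\mu_{H}^{\pi}\circ(\pi\otimes H)=\mu_{H}\circ(\pi\otimes\phi_{H})\circ(\delta_{A}\otimes H)$ of~(\ref{1-c}). Once this is established, the first assertion of (ii) follows because module algebra structures pull back along algebra-coalgebra morphisms, and (\ref{mu-pi-barphi}) is obtained by precomposing~(\ref{iii-eq1}) with $\pi\otimes\pi$, using $\pi\circ\mu_{A}=\mu_{H}^{\pi}\circ(\pi\otimes\pi)$ together with the same identification $\phi_{H}^{\prime}=\Gamma_{H_{1}}^{\prime}\circ(\pi\otimes H)$.

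For the implication (ii) $\Rightarrow$ (i), the plan is to compute directly with (\ref{mu-pi-barphi}). Substituting the definition of $\phi_{H}^{\prime}$ and unfolding by coassociativity rewrites $\pi\circ\mu_{A}$ as a morphism in which the factor $\lambda_{H}\circ\pi$ and one factor of $\pi$ are applied to two adjacent outputs of $\delta_{A}^{(3)}(a)$ that have each been braided against $b$. The decisive step is to apply the naturality identity $c_{A\otimes A,A}\circ(\delta_{A}\otimes A)=(A\otimes\delta_{A})\circ c_{A,A}$, which recognises those two braided factors as $\delta_{A}$ evaluated on a single (braided) copy of $a$; the antipode axiom $\mu_{H}\circ(\lambda_{H}\otimes H)\circ\delta_{H}=\eta_{H}\circ\varepsilon_{H}$ then collapses those slots to $\eta_{H}\circ\varepsilon_{A}$. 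A second naturality step, using $c_{K,H}=\mathrm{id}$ to obtain $(\pi\otimes\varepsilon_{A})\circ c_{A,A}=\varepsilon_{A}\otimes\pi$, eliminates the surviving braiding between $b$ and the counit-absorbed factor of $a$, and the counit identity on $A$ finishes the reduction to $\mu_{H}\circ(\pi\otimes\phi_{H})\circ(\delta_{A}\otimes\pi)$, which is (\ref{1-c}).

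The main technical obstacle lies in the second implication: one must coordinate three naturality identities for $c$ (against $\delta_{A}$, $\pi$, and $\varepsilon_{A}$), coassociativity of $\delta_{A}$, and the antipode axiom so that the collapse happens on the correct coproduct factor. In a symmetric or cocommutative setting these simplifications are immediate, but in the general braided setting the order in which the naturality identities are applied is crucial. Note that the module algebra axioms for $\phi_{H}^{\prime}$ are not essentially used in the direction (ii) $\Rightarrow$ (i); they are guaranteed a posteriori by the first implication.
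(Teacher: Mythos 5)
Your proposal is correct and follows essentially the same route as the paper: for (i)$\Rightarrow$(ii) both pass to the Hopf brace ${\mathbb H}_{\pi}$, establish $\phi_H^{\prime}=\Gamma_{H}^{\prime}\circ(\pi\otimes H)$ (your verification via the equivalent form of (\ref{1-c}) is a trivial variant of the paper's use of (\ref{gamphi}) and (\ref{eb2})), pull back the $H_{\pi}$-module algebra structure along $\pi$, and deduce (\ref{mu-pi-barphi}) from (\ref{iii-eq1}); for (ii)$\Rightarrow$(i) both reduce to proving (\ref{1-c}) by the same direct computation combining coassociativity, naturality of $c$, the coalgebra morphism property of $\pi$, and the antipode and counit axioms.
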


\begin{proof} First we will prove that (i) $\Rightarrow $ (ii). Indeed, let ${\mathbb H}_{\pi}$ be the Hopf brace introduced in the proof of Theorem \ref{1-th}. Then $(H, \Gamma_{H})$ is a left $H_{\pi}$-module algebra and (\ref{gamphi}) holds. Then, if 
\[\phi_H^{\prime} = \mu_H\circ (\mu_H\ot H)\circ (\pi\ot \phi_H\ot (\lambda_H\circ \pi))\circ (\delta_A\ot c_{A,H})\circ (\delta_A\ot H),\]
we have that 
\begin{equation}
\label{gamphi1}
\phi_{H}^{\prime}=\Gamma_{H}^{\prime}\circ (\pi\otimes H)
\end{equation}
holds because 
\begin{itemize}
\item[ ]$\hspace{0.38cm}\phi_{H}^{\prime}$
\item [ ]$=\mu_{H}\circ ((\mu_{H}\circ (H\otimes (\phi_{H}\circ (\pi^{-1}\otimes H)))\circ (\delta_{H}\otimes H))\otimes \lambda_{H})\circ (H\otimes c_{H,H})\circ ((\delta_{H}\circ \pi)\otimes H) $ {\scriptsize (by }
\item[ ]$\hspace{0.38cm}$ {\scriptsize  the condition of coalgebra isomorphism for $\pi$ and the naturality of $c$)}
\item [ ]$=\mu_{H}\circ ((\mu_{H}\circ (H\otimes \Gamma_{H})\circ (\delta_{H}\otimes H))\otimes \lambda_{H})\circ (H\otimes c_{H,H})\circ ((\delta_{H}\circ \pi)\otimes H) $ {\scriptsize (by (\ref{gamphi}))}
\item [ ]$=\mu_{H}\circ (\mu_{H_{\pi}}\otimes \lambda_{H})\circ (H\otimes c_{H,H})\circ ((\delta_{H}\circ \pi)\otimes H)$  {\scriptsize (by (\ref{eb2}))}
\item [ ]$= \Gamma_{H}^{\prime}\circ (\pi\otimes H)$ {\scriptsize (by definition of $\Gamma_{H}^{\prime}$).}
\end{itemize}

Then, as a consequence of (\ref{gamphi1}), $(H,\phi_{H}^{\prime})$ is a left $A$-module algebra because
$(H,\Gamma_{H}^{\prime})$ is a left $H_{\pi}$-module algebra. 
Finally, 
\begin{itemize}
\item[ ]$\hspace{0.38cm}\mu_H\circ (\phi_H^{\prime}\ot \pi)\circ (A\ot c_{A,H})\circ (\delta_A\ot \pi) $
\item [ ]$=\mu_{H}\circ (\Gamma_{H}^{\prime}\otimes H)\circ (H\otimes c_{H,H})\circ ((\delta_{H}\circ \pi)\otimes \pi) $ {\scriptsize (by (\ref{gamphi1}),  the naturality of $c$ and the condition of coalgebra morphism for $\pi$)}
\item [ ]$= \mu_{H_{\pi}}\circ (\pi\otimes \pi)$ {\scriptsize (by (\ref{iii-eq1}))}
	\item [ ]$=\pi\circ \mu_{A} ${\scriptsize (by definition of $\mu_{H_{\pi}}$)}
\end{itemize}
and then (\ref{mu-pi-barphi}) holds.	

Conversely, to prove that (ii) $\Rightarrow$ (i) we only need to show that (\ref{1-c}) holds. Indeed:
\begin{itemize}
\item[ ]$\hspace{0.38cm}\pi\circ \mu_{A} $ 
\item [ ]$=\mu_H\circ (\phi_H^{\prime}\ot \pi)\circ (A\ot c_{A,H})\circ (\delta_A\ot \pi)$
{\scriptsize (by (\ref{mu-pi-barphi}))}
\item [ ]$= \mu_H\circ ((\mu_H\circ (\mu_H\ot H)\circ (\pi\ot \phi_H\ot (\lambda_H\circ \pi))\circ (\delta_A\ot c_{A,H})\circ (\delta_A\ot H))\ot \pi)\circ (A\ot c_{A,H})\circ (\delta_A\ot \pi)$ 
\item[ ]$\hspace{0.38cm}$ {\scriptsize (by the definition of $\phi_{H}^{\prime}$)}
\item [ ]$=\mu_{H}\circ ( (\mu_{H}\circ (\pi \otimes \phi_{H})\circ (\delta_{A}\otimes H))\otimes ((\lambda_{H}\ast id_{H})\circ \pi))\circ (A\otimes c_{A,H})\circ (\delta_{A}\otimes \pi) ${\scriptsize (by the associativity}
\item[ ]$\hspace{0.38cm}$ {\scriptsize  of $\mu_{H}$, the naturality of $c$, the coassociativity of $\delta_{A}$ and the condition of coalgebra morphism for $\pi$)}
\item [ ]$= \mu_{H}\circ (\pi \otimes \phi_{H})\circ (\delta_{A}\otimes \pi)${\scriptsize (by (\ref{antipode}), naturality of $c$ and the unit and counit properties).}
\end{itemize}	
\end{proof}

\begin{remark}
Observe that in the previous theorem we can recover $\phi_H$ from $\phi_H^{\prime}$ as
\[\phi_H = \mu_H\circ (\mu_H\ot H)\circ ((\lambda_H\circ \pi)\ot \phi_H^{\prime}\ot \pi)\circ (\delta_A\ot c_{A,H})\circ (\delta_A\ot H),\]
and $\phi_H$ induces a left $A$-module algebra structure on $H$ if, and only if, $\phi_H^{\prime}$ does. As a consequence, we can define an invertible 1-cocycle as a coalgebra isomorphism satisfying condition (\ref{mu-pi-barphi}), where $(H, \phi_H^{\prime})$ is an $A$-module algebra. 
\end{remark}

\begin{lemma}
\label{ic-c}
Let $\pi:A\rightarrow H$ be an object in {\sf IC} with action $\phi_H$ and such that $H$ is cocommutative. Then $\phi_{H}$ is a coalgebra morphism.
\end{lemma}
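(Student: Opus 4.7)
The plan is to exploit the identification from Theorem \ref{1-th} and reduce the claim to the already-stated coalgebra property of $\Gamma_{H_{1}}$ in Remark \ref{dphi}.

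First I would pass through the functor ${\sf Q}$ of Theorem \ref{1-th}: associated to $\pi:A\rightarrow H$ we have the Hopf brace ${\mathbb H}_{\pi}=(H, H_{\pi})$, where both Hopf algebras share the underlying coalgebra $(H,\varepsilon_{H},\delta_{H})$. Therefore, as soon as $H$ is cocommutative both $H$ and $H_{\pi}$ are cocommutative, so ${\mathbb H}_{\pi}$ is a cocommutative Hopf brace in the sense of Definition \ref{H-brace}.

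Next I would apply Remark \ref{dphi}: for a cocommutative Hopf brace the morphism $\Gamma_{H}=\mu_{H}\co (\lambda_{H}\ot \mu_{H_{\pi}})\co (\delta_{H}\ot H)$ is a coalgebra morphism. Combining this with (\ref{gamphi}), which gives $\phi_{H}=\Gamma_{H}\co (\pi\ot H)$, I obtain $\phi_{H}$ as a composition of coalgebra morphisms: the tensor product $\pi\ot id_{H}$ is a coalgebra morphism because both factors are (recall $\pi$ is a coalgebra isomorphism by definition of {\sf IC}), and $\Gamma_{H}$ is one by the previous step. Hence $\phi_{H}$ is a coalgebra morphism.

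There is essentially no obstacle in this argument once one is willing to transport the problem to the Hopf brace side; the whole content is packaged in Remark \ref{dphi} and in the factorisation (\ref{gamphi}). Any attempt to verify the coalgebra morphism condition for $\phi_{H}$ directly from the defining axioms of an invertible 1-cocycle would require unpacking $\mu_{H_{\pi}}$ through $\pi$ and $\pi^{-1}$ and invoking cocommutativity of $H$ together with naturality of $c$ at several points, which is exactly the calculation hidden inside Remark \ref{dphi}; so the efficient route is the one described above.
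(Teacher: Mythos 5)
Your argument is correct and is essentially the paper's own proof: the paper also deduces the lemma directly from the factorisation (\ref{gamphi}), $\phi_{H}=\Gamma_{H}\circ (\pi\otimes H)$, together with Remark \ref{dphi} applied to the cocommutative Hopf brace ${\mathbb H}_{\pi}$. Your additional remarks (that cocommutativity of $H$ makes ${\mathbb H}_{\pi}$ cocommutative since both structures share $\delta_{H}$, and that $\pi\otimes id_{H}$ is a coalgebra morphism) just make explicit what the paper leaves implicit.
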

\begin{proof}
	The proof follows directly from the equality (\ref{gamphi}) and Remark \ref{dphi}. 
\end{proof}

\begin{lemma}
Let $\pi:A\rightarrow H$ be an object in {\sf IC} with action $\phi_H$ and such that $H$ is cocommutative. Then the action $\phi_{H}^{\prime}$ defined in Theorem \ref{phi-bar-phi} is a coalgebra morphism.
\end{lemma}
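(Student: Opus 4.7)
The plan is to exploit the factorization (\ref{gamphi1}) obtained in the proof of Theorem \ref{phi-bar-phi}, namely
\[
\phi_H^{\prime} = \Gamma_H^{\prime}\circ (\pi\otimes H),
\]
and then reduce the claim to the known statement in Remark \ref{dphi} that $\Gamma_{H_{1}}^{\prime}$ is a coalgebra morphism for every cocommutative Hopf brace.

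First, I would observe that the Hopf brace $\mathbb{H}_{\pi}=(H,H_{\pi})$ constructed in the proof of Theorem \ref{1-th} has underlying coalgebra $(H,\varepsilon_{H},\delta_{H})$, since both $H$ and $H_{\pi}$ share this coalgebra structure. Because $H$ is cocommutative by hypothesis, the Hopf brace $\mathbb{H}_{\pi}$ is cocommutative in the sense of Definition \ref{H-brace}. Applying Remark \ref{dphi} to $\mathbb{H}_{\pi}$, the morphism
\[
\Gamma_{H}^{\prime}=\mu_{H}\circ (\mu_{H_{\pi}}\otimes \lambda_{H})\circ (H\otimes c_{H,H})\circ (\delta_{H}\otimes H)
\]
is a coalgebra morphism from $H\otimes H$ to $H$.

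Next, since $\pi:A\to H$ is an object in {\sf IC}, it is in particular a coalgebra morphism, so $\pi\otimes H: A\otimes H\to H\otimes H$ is a coalgebra morphism between the tensor product coalgebras. Composing, we conclude that
\[
\phi_H^{\prime}=\Gamma_{H}^{\prime}\circ (\pi\otimes H)
\]
is a coalgebra morphism, which is exactly the desired conclusion.

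I do not foresee a substantial obstacle: all the hard work (the nontrivial identity (\ref{gamphi1}) and the coalgebra-morphism property of $\Gamma_{H}^{\prime}$ in the cocommutative setting) has been done earlier in the paper or imported from \cite{Proj23}. The only point to check explicitly is that cocommutativity of $H$ transfers to cocommutativity of the Hopf brace $\mathbb{H}_{\pi}$, which is immediate from the construction of $\mathbb{H}_{\pi}$ in Theorem \ref{1-th}.
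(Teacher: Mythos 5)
Your argument is correct and is exactly the paper's proof: the paper also deduces the statement directly from the factorization (\ref{gamphi1}) together with Remark \ref{dphi} applied to the cocommutative Hopf brace ${\mathbb H}_{\pi}$. Your explicit remarks that cocommutativity of $H$ makes ${\mathbb H}_{\pi}$ cocommutative and that $\pi\otimes H$ is a coalgebra morphism simply spell out what the paper leaves implicit.
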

\begin{proof}
	The proof follows directly from the equality (\ref{gamphi1}) and Remark \ref{dphi}. 
\end{proof}

\begin{remark}
Let	${\sf H}:{\sf Hopf}\rightarrow {\sf IC}$ be the functor defined in Remark \ref{remH}. If $A$ is a Hopf algebra, its image by {\sf H} is the invertible 1-cocycle $id_{A}:A\rightarrow A$ where the action is the trivial one, i.e., $\phi_{A}=t_{A}=\varepsilon_{A}\otimes A$ and then  $\phi_{A}$ is a coalgebra morphism.  Also, by (ii) of Theorem \ref{phi-bar-phi}, we have that 
$$\phi_{A}^{\prime}=\mu_{A}\circ (\mu_{A}\otimes \lambda_{A})\circ (A\otimes c_{A,A})\circ ( \delta_{A}\otimes A)=\varphi_{A}^{ad}.$$
\end{remark}

Taking into account the previous considerations, we can introduce the notion of left module for an invertible 1-cocycle.

\begin{definition}
\label{def-pi-module}
Let $\pi:A\to H$ be an invertible 1-cocycle. A left module over the invertible 1-cocycle $\pi:A\to H$ is a 6-tuple \((M,N, \phi_M, \varphi_M,  \phi_N, \gamma)\) where
\begin{enumerate}
\item[(i)] $(M, \phi_M)$ is a left $A$-module and $(M, \varphi_M)$ is a left $H$-module such that
\begin{equation}
\label{p-v}
\phi_M\circ (A\ot \varphi_M) = \varphi_M\circ (\phi_H\ot \phi_M)\circ (A\ot c_{A,H}\ot M)\circ (\delta_A\ot H\ot M).
\end{equation}
\item[(ii)] $(N, \phi_N)$ is a left $A$-module.
\item[(iii)] $\gamma:N\to M$ is an isomorphism in $\sf C$ such that 
\begin{equation}\label{eq-gamma}
\gamma\circ \phi_N = \varphi_M\circ (\pi\ot \phi_M)\circ (\delta_A\ot \gamma).
\end{equation}
\end{enumerate} 

Let \((M,N, \phi_M, \varphi_M,  \phi_N, \gamma)\) and 
\((M^{\prime},N^{\prime}, \phi_{M^{\prime}}, \varphi_{M^{\prime}},  \phi_{N^{\prime}}, \gamma^{\prime})\)
be left modules over an invertible 1-cocycle $\pi:A\to H$. A morphism between them is a pair $(h,l)$ such that $h:M\rightarrow M^{\prime}$ is a morphism of  left $A$-modules and left $H$-modules,  $l:N\rightarrow N^{\prime}$ is a morphism of  left $A$-modules and the following identity holds:
\begin{equation}\label{fg-g}
h\circ \gamma = \gamma^{\prime}\circ l.
\end{equation}

Note that, by (\ref{fg-g}), the morphism $l$ is determined by $h$ because $l=(\gamma^{\prime})^{-1}\circ h\circ \gamma.$

With the obvious composition of morphisms, left modules over an invertible 1-cocycle $\pi:A\to H$ with action $\phi_{H}$ form a category that we will denote by $\;_{(\pi, \phi_{H})}${\sf Mod}.
\end{definition}

\begin{remark}
If  \((M,N, \phi_M, \varphi_M,  \phi_N, \gamma)\) is a left module over the invertible 1-cocycle $\pi:A\to H$, by (\ref{eq-gamma}), we obtain that $\phi_{N}$ is determined by $\phi_{M}$ and $\varphi_{M}$ because
\begin{equation}
\label{req-g1}
 \phi_N =\gamma^{-1}\circ  \varphi_M\circ (\pi\ot \phi_M)\circ (\delta_A\ot \gamma).
\end{equation}

Also, composing in both sides of the equality  (\ref{eq-gamma}) with $(((\lambda_H\circ \pi)\ot A)\circ \delta_A)\ot \gamma^{-1}$ on the right and with $\varphi_M$ on the left we obtain the identity
\begin{equation}
\label{req-g2}
\phi_M =\varphi_{M}\circ ((\lambda_{H}\circ \pi)\otimes (\gamma\circ \phi_{N}))\circ (\delta_{A}\otimes \gamma^{-1}).
\end{equation}

\end{remark}

\begin{example}
\label{exmod1}
It is easy to see that if $\pi:A\to H$ is an invertible 1-cocycle, the 6-tuple \((H,A, \phi_H, \mu_H,  \mu_A, \pi)\) is an example of left module over the invertible cocycle $\pi:A\to H$. 

Also, the unit object $K$ of {\sf C} is  an example of left module over the invertible 1-cocycle $\pi:A\to H$, where $\phi_{K}=\varepsilon_{A}$, $\varphi_{K}=\varepsilon_{H}$ and $\gamma=id_{K}$ because by (\ref{gamphi}) we have that $\varepsilon_{H}\circ \phi_{H}=\varepsilon_{A}\otimes \varepsilon_{H}.$ We call $(K,K, \phi_K , \varphi_K, \phi_K , id_K)$ the trivial left module over the invertible 1-cocycle $\pi:A\to H$.

Note that,  if $(M,\phi_{M})$ is an object in $_{\sf H}${\sf Mod}, the 6-tuple $(M,M, \phi_{M}, t_{M}=\varepsilon_{H}\otimes M, \phi_{M}, id_{M})$ is a left module over the invertible 1-cocycle $id_{H}:H\rightarrow H$ defined in Remark \ref{remH}. Also, if $f$ is a morphism between two left $H$-modules $(M,\phi_{M})$ and $(P,\phi_{P})$, the pair $(f,f)$ is a morphism of left modules over the invertible 1-cocycle $id_{H}:H\rightarrow H$ between $(M,M, \phi_{M}, t_{M}, \phi_{M}, id_{M})$ and $(P,P, \phi_{P}, t_{P}, \phi_{P}, id_{P})$. Therefore, we have a functor $${\sf I}_{H}:\;_{\sf H}{\sf Mod}\;\rightarrow \;_{(id_{H}, t_{H})}{\sf Mod}$$ defined on objects by 
$${\sf I}_{H}((M,\phi_{M}))= (M,M, \phi_{M}, t_{M}, \phi_{M}, id_{M})$$
and on morphisms by ${\sf I}_{H}(f)=(f,f)$.

\end{example}

\begin{theorem}
\label{modfg}
Assume that  $(f,g)$ is a morphism  between the invertible 1-cocycles $\pi:A\rightarrow H$ and $\tau:B\rightarrow D$. Then, there exists a functor 
$${\sf M}_{(f,g)}:\;_{(\tau, \phi_{D})}{\sf Mod}\;\rightarrow \;_{(\pi, \phi_{H})}{\sf Mod}$$
defined on objects by 
$${\sf M}_{(f,g)}((P,Q, \phi_P, \varphi_P,  \phi_Q, \theta))=(P,Q, \phi_P^{\pi}=\phi_P\circ (f\otimes P), \varphi_P^{\pi}=\varphi_P\circ (g\otimes P),  \phi_Q^{\pi}=\phi_Q\circ (f\otimes Q), \theta)$$
and on morphisms by the identity.
\end{theorem}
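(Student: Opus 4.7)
The plan is to verify that ${\sf M}_{(f,g)}$ sends objects to objects and morphisms to morphisms, and then check functoriality (which is essentially automatic because on morphisms it acts as the identity). For the object-level check, I would take a left module $(P,Q,\phi_P,\varphi_P,\phi_Q,\theta)$ over $\tau:B\to D$ and verify the three axioms of Definition \ref{def-pi-module} for the tuple $(P,Q,\phi_P^{\pi},\varphi_P^{\pi},\phi_Q^{\pi},\theta)$. Condition (ii) and the left-module axioms in (i) follow immediately from the fact that $f:A\to B$ and $g:H\to D$ are algebra morphisms (so pullback of a left $B$- or $D$-action along them remains an associative, unital action).

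The main content is the compatibility equality (\ref{p-v}) and the twisted equivariance (\ref{eq-gamma}). For (\ref{p-v}), I would start from the left-hand side $\phi_P^{\pi}\circ(A\ot\varphi_P^{\pi})$, rewrite it as $\phi_P\circ(B\ot\varphi_P)\circ(f\ot g\ot P)$, and apply the hypothesis that $(P,\phi_P,\varphi_P)$ satisfies the analogous identity over $(\tau,\phi_D)$. Then I would push $f$ through $\delta_B$ via the coalgebra morphism condition $\delta_B\circ f=(f\ot f)\circ \delta_A$, use naturality of the braiding to rewrite $c_{B,D}\circ(f\ot g)=(g\ot f)\circ c_{A,H}$, and finally apply (\ref{1-c3}) to replace $\phi_D\circ(f\ot g)$ by $g\circ\phi_H$. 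Collecting terms yields exactly the right-hand side $\varphi_P^{\pi}\circ(\phi_H\ot\phi_P^{\pi})\circ(A\ot c_{A,H}\ot P)\circ(\delta_A\ot H\ot P)$. The argument for (\ref{eq-gamma}) is similar but shorter: starting from $\theta\circ\phi_Q^{\pi}=\theta\circ\phi_Q\circ(f\ot Q)$, apply the corresponding identity for $(\tau,\phi_D)$, push $f$ through $\delta_B$, and use (\ref{1-c2}) to replace $\tau\circ f$ by $g\circ\pi$.

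For the morphism-level check, if $(h,l)$ is a morphism of left modules over $\tau:B\to D$, I would verify each of the three compatibility equalities for ${\sf M}_{(f,g)}((h,l))=(h,l)$ over $\pi:A\to H$. For instance, $\phi_{P'}^{\pi}\circ(A\ot h)=\phi_{P'}\circ(B\ot h)\circ(f\ot P)=h\circ\phi_P\circ(f\ot P)=h\circ\phi_P^{\pi}$ just by inserting $f$; the analogous computations work for $\varphi^{\pi}$ and $\phi_Q^{\pi}$, and identity (\ref{fg-g}) is unchanged. Functoriality with respect to composition and identities is then immediate because the action on morphisms is literally the identity.

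I do not expect any genuine obstacle here: all the work is bookkeeping built from the coalgebra/algebra morphism properties of $f,g$ together with (\ref{1-c2}), (\ref{1-c3}), and naturality of the braiding. The only mildly delicate moment is the braid rewrite inside the chain for (\ref{p-v}), where careful ordering of the $f\ot f\ot g$ factors relative to $c_{B,D}$ must be handled, but this is a direct instance of naturality.
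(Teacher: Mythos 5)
Your proposal is correct and follows essentially the same route as the paper: restriction of the actions along the algebra morphisms $f$ and $g$ gives the module structures, and the identities (\ref{p-v}) and (\ref{eq-gamma}) are verified using exactly the same ingredients the paper uses, namely the coalgebra morphism condition for $f$, naturality of $c$, and (\ref{1-c3}) resp.\ (\ref{1-c2}), together with the corresponding identities for the original module over $(\tau,\phi_{D})$. The only difference is that you run the chains of equalities in the opposite direction to the paper's, which is immaterial.
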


\begin{proof} The existence of the functor ${\sf M}_{(f,g)}$ is a consequence of the following facts: Trivially $(P, \phi_P^{\pi})$, $(Q, \phi_Q^{\pi})$ are left $A$-modules and $(P, \varphi_P^{\pi})$ is a left $H$-module. Also,  
\begin{itemize}
\item[ ]$\hspace{0.38cm}\varphi_P^{\pi}\circ (\phi_H\ot \phi_P^{\pi})\circ (A\ot c_{A,H}\ot P)\circ (\delta_A\ot H\ot P) $
\item [ ]$=\varphi_P\circ ((g\circ \phi_{H})\otimes (\phi_{P}\circ (f\otimes P)))\circ (A\otimes c_{A,H}\otimes P)\circ (\delta_{A}\otimes H\otimes P )$ {\scriptsize (by definition of $\phi_P^{\pi}$ and $\varphi_P^{\pi}$)}
\item [ ]$=\varphi_P\circ ( \phi_{D}\otimes \phi_{P})\circ (B\otimes c_{B,D}\otimes P)\circ (((f\otimes f)\circ \delta_{A})\otimes g\otimes P ) $ {\scriptsize (by (\ref{1-c3}) and natirality of $c$)}
\item [ ]$=\varphi_P\circ ( \phi_{D}\otimes \phi_{P})\circ (B\otimes c_{B,D}\otimes P)\circ ((\delta_{B}\circ f)\otimes g\otimes P )$ {\scriptsize (by the coalgebra morphism condition for $f$)}
\item [ ]$=\phi_P^{\pi}\circ (A\ot \varphi_P^{\pi}) ${\scriptsize (by (\ref{p-v}))},
\end{itemize}
and
\begin{itemize}
	\item[ ]$\hspace{0.38cm}\varphi_P^{\pi}\circ (\pi\ot \phi_P^{\pi})\circ (\delta_A\ot \theta) $
	\item [ ]$= \varphi_P\circ ((g\circ \pi)\otimes (\phi_{P}\circ (f\otimes P)))\circ (\delta_{A}\otimes \theta)$ {\scriptsize (by definition of $\phi_P^{\pi}$ and $\varphi_P^{\pi}$)}
	\item [ ]$=\varphi_P\circ ((\tau\circ f)\otimes (\phi_{P}\circ (f\otimes P)))\circ (\delta_{A}\otimes \theta)$ {\scriptsize (by (\ref{1-c2}))}
	\item [ ]$= \varphi_P\circ (\tau\otimes \phi_{P})\circ ((\delta_{B}\circ f)\otimes \theta)$ {\scriptsize (by the coalgebra morphism condition for $f$)}
	\item [ ]$=\theta\circ \phi_Q^{\pi}$ {\scriptsize (by (\ref{eq-gamma}))}
\end{itemize}
Then $(P,Q, \phi_P^{\pi}, \varphi_P^{\pi}, \phi_Q^{\pi}, \theta)$ is an object in $\;_{(\pi, \phi_{H})}{\sf Mod}$. Finally, it is obvious that if $(h,l)$ is a morphism in $\;_{(\tau, \phi_{D})}{\sf Mod}$, $(h,l)$ is a morphism in $\;_{(\pi, \phi_{H})}{\sf Mod}$. 

\end{proof}

\begin{remark}
Let $f:H\rightarrow H^{\prime}$ be a Hopf algebra morphisms. Then, by Example \ref{exmod1} and Theorem \ref{modfg}, we have the following commutative diagram 

$$
\setlength{\unitlength}{4mm}
\begin{picture}(14,7.5)
		
\put(4,6){\vector(1,0){5}}
\put(2,5){\vector(0,-1){2}}
\put(11.5,5){\vector(0,-1){2}}
\put(4.2,2){\vector(1,0){5}}
		
\put(6.5,7){\makebox(0,0){$ {\sf I}_{H^{\prime}}$}}
\put(2,6){\makebox(0,0){$\;_{{\sf H}^{\prime}}{\sf Mod}$ }}
\put(12,6){\makebox(0,0){$\;_{(id_{H^{\prime}}, t_{H^{\prime}})}{\sf Mod}$}}
		
\put(1,4.2){\makebox(0,0){${\sf M}_{f}$}}
\put(13,4.2){\makebox(0,0){${\sf M}_{(f,f)}$}}
		
\put(2,2){\makebox(0,0){$\;_{\sf H}{\sf Mod}$}}
\put(12,2){\makebox(0,0){$\;_{(id_{H}, t_{H})}{\sf Mod}$}}
\put(6.5,1){\makebox(0,0){${\sf I}_{H}$}}
		
\end{picture}
$$
where ${\sf M}_{f}$ is the restriction of scalars functor.
\end{remark}

\begin{remark}
If $(f,g)$ is an isomorphism defined between the invertible 1-cocycles $\pi:A\rightarrow H$ and $\tau:B\rightarrow D$ with inverse $(f^{-1}, g^{-1})$, the functor ${\sf M}_{(f,g)}$ is an isomorphism of categories with inverse ${\sf M}_{(f^{-1},g^{-1})}$.  For example, in the proof of Theorem \ref{1-th} we proved that, for all invertible 1-cocycle $\pi:A\rightarrow H$, $(\pi, id_{H})$ is an isomorphism between the invertible 1-cocycles $\pi:A\rightarrow H$ and $id_{H}:H_{\pi}\rightarrow H$. Therefore, the functor 
$${\sf M}_{(\pi,id_{H})}:\;_{(id_{H}, \Gamma_{H})}{\sf Mod}\;\rightarrow \;_{(\pi, \phi_{H})}{\sf Mod}$$
is an isomorphism of categories with inverse 
$${\sf M}_{(\pi^{-1},id_{H})}:\;_{(\pi, \phi_{H})}{\sf Mod} \;\rightarrow \;_{(id_{H}, \Gamma_{H})}{\sf Mod}.$$
\end{remark}

\begin{theorem}
\label{th-mon-cat}
Let's assume that {\sf C} is symmetric with natural isomorphism of symmetry $c$. Let $A$ and $H$ be cocommutative Hopf algebras in {\sf C}. Then the category of left modules over an invertible 1-cocycle $\pi:A\to H$ is symmetric monoidal with unit object the trivial left module over the invertible 1-cocycle $\pi:A\to H$.
\end{theorem}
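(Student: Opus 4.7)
The plan is to construct the symmetric monoidal structure on $_{(\pi,\phi_{H})}{\sf Mod}$ by transporting the standard tensor product of modules over a (co)commutative Hopf algebra, using cocommutativity to ensure all the structural identities of Definition \ref{def-pi-module} survive the tensoring. The unit object is the trivial module of Example \ref{exmod1}, the associator and unitors are inherited from the underlying category ${\sf C}$, and the symmetry is built from the natural symmetry $c$ of ${\sf C}$.

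First, given two objects $\mathbb{M}=(M,N,\phi_M,\varphi_M,\phi_N,\gamma)$ and $\mathbb{M}'=(M',N',\phi_{M'},\varphi_{M'},\phi_{N'},\gamma')$, I define their tensor product to be
\[
\mathbb{M}\otimes\mathbb{M}'=(M\otimes M',\, N\otimes N',\, \phi_{M\otimes M'},\, \varphi_{M\otimes M'},\, \phi_{N\otimes N'},\, \gamma\otimes\gamma')
\]
where each action is the diagonal action associated to the coproducts of $A$ and $H$, namely
\[
\phi_{M\otimes M'}=(\phi_M\otimes\phi_{M'})\circ(A\otimes c_{A,M}\otimes M')\circ(\delta_A\otimes M\otimes M'),
\]
and analogously for $\varphi_{M\otimes M'}$ and $\phi_{N\otimes N'}$. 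Standard arguments using the coassociativity/counit axioms of $\delta_A$, $\delta_H$ and the naturality of $c$ show that these are left module actions.

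The first nontrivial step is to verify condition (\ref{p-v}) for $M\otimes M'$. Expanding both sides in terms of the diagonal structures, I would move the two copies of $\delta_A$ and $\delta_H$ through $c$ by naturality, then apply (\ref{p-v}) componentwise on $M$ and $M'$. To match the two resulting expressions, I will crucially use that $A$ and $H$ are cocommutative and that, by Lemma \ref{ic-c}, the action $\phi_H:A\otimes H\to H$ is a coalgebra morphism; this lets me commute $\phi_H$ past the coproducts in a controlled way. The second step is to verify the cocycle relation (\ref{eq-gamma}) for $\gamma\otimes\gamma'$; using (\ref{eq-gamma}) on each factor together with the fact that $\pi$ is a coalgebra morphism and that $\delta_A$ is cocommutative allows the two copies of $\pi$ and the shuffling of $\delta_A$ to line up properly. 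These two verifications are the main technical content and the only place where symmetry and cocommutativity are essential.

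Next, if $(h,l):\mathbb{M}\to\mathbb{M}_1$ and $(h',l'):\mathbb{M}'\to\mathbb{M}'_1$ are morphisms, then $(h\otimes h',\,l\otimes l')$ is a morphism in $_{(\pi,\phi_H)}{\sf Mod}$: it is a morphism of left $A$- and $H$-modules by the usual argument, and (\ref{fg-g}) for the tensor product follows from (\ref{fg-g}) for each factor. The trivial module $(K,K,\phi_K,\varphi_K,\phi_K,id_K)$ of Example \ref{exmod1} serves as a unit: the unit constraints $l_{M\otimes M'}$ and $r_{M\otimes M'}$ inherited from ${\sf C}$ are morphisms in our category because $\varepsilon_A$ and $\varepsilon_H$ are algebra morphisms, and $\gamma$ satisfies (\ref{fg-g}) automatically. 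The associator $a_{M,N,P}$ of ${\sf C}$ is a morphism of diagonal modules over any Hopf algebra, hence lifts to our category.

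Finally, for the symmetry I take $c^{\mathbb{M},\mathbb{M}'}=(c_{M,M'},\,c_{N,N'})$. Naturality and the symmetry identity $c^{2}=id$ come directly from the corresponding properties of $c$ in ${\sf C}$, and $c_{M,M'}$ is a morphism of left $A$- and $H$-modules by cocommutativity of $A$ and $H$ together with naturality of $c$; condition (\ref{fg-g}) reduces to the naturality square $(c_{M,M'})\circ(\gamma\otimes\gamma')=(\gamma\otimes\gamma')\circ(c_{N,N'})$. The hexagon axioms follow from those of $c$ in ${\sf C}$. The hardest step I foresee is precisely the diagrammatic verification of (\ref{p-v}) and (\ref{eq-gamma}) for the diagonal tensor product, since these mix the $A$- and $H$-module structures across $\pi$ and $\phi_H$; once Lemma \ref{ic-c} is used to promote $\phi_H$ to a coalgebra morphism and cocommutativity is used to rearrange the legs of $\delta_A$ and $\delta_H$, the remaining verifications are routine bialgebra-module computations.
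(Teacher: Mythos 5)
Your proposal is correct and follows essentially the same route as the paper: the diagonal tensor product via $\delta_A$ and $\delta_H$, verification of (\ref{p-v}) using Lemma \ref{ic-c}, cocommutativity, symmetry and naturality of $c$, verification of (\ref{eq-gamma}) via the coalgebra morphism condition on $\pi$ and cocommutativity, the trivial module of Example \ref{exmod1} as unit, and the symmetry given componentwise by $(c_{M,P},c_{N,Q})$. The paper's proof is simply the explicit diagrammatic execution of the two verifications you outline (it in fact leaves the unit, associator and morphism-tensoring checks as implicit as you do).
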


\begin{proof} Let \((M,N, \phi_M, \varphi_M,  \phi_N, \gamma)\), \((P,Q, \phi_P, \varphi_P,  \phi_Q, \theta)\) be objects in $\;_{(\pi, \phi_{H})}{\sf Mod}$. Then we will define their tensor product as 
$$(M,N, \phi_M, \varphi_M,  \phi_N, \gamma)\otimes (P,Q, \phi_P, \varphi_P,  \phi_Q, \theta)$$
$$=(M\otimes P,N\otimes Q, \phi_{M\otimes P}, \varphi_{M\otimes P},  \phi_{N\otimes Q}, \gamma\otimes \theta)$$
where the left $A$-actions are defined by  $\phi_{M\otimes P}=(\phi_{M}\otimes \phi_{P})\circ (A\otimes c_{A,M}\otimes P)\circ (\delta_{A}\otimes M\otimes P)$, $\phi_{N\otimes Q}=(\phi_{N}\otimes \phi_{Q})\circ (A\otimes c_{A,N}\otimes Q)\circ (\delta_{A}\otimes N\otimes Q)$ and the left $H$-action is $\varphi_{M\otimes P}=(\varphi_{M}\otimes \varphi_{P})\circ (H\otimes c_{H,M}\otimes P)\circ (\delta_{H}\otimes M\otimes P)$. By the monoidal property of the category of modules over a Hopf algebra we have that $(M\otimes P, \phi_{M\otimes P})$ and $(N\otimes Q, \phi_{N\otimes Q})$ are left $A$-modules and $(M\otimes P, \varphi_{M\otimes P})$ is a left $H$-module. Moreover, the equality (\ref{p-v}) holds because
\begin{itemize}
	\item[ ]$\hspace{0.38cm}\varphi_{M\otimes P}\circ (\phi_{H}\otimes \phi_{M\otimes P})\circ (A\otimes c_{A,H}\otimes M\otimes P)\circ (\delta_{A}\otimes H\otimes M\otimes P) $ 
	\item [ ]$=(\varphi_{M}\otimes \varphi_{P})\circ (H\otimes c_{H,M}\otimes P)\circ ((\delta_{H}\circ \phi_{H})\otimes \phi_{M\otimes P})\circ (A\otimes c_{A,H}\otimes M\otimes P)\circ (\delta_{A}\otimes H\otimes M\otimes P) $
	\item[ ]$\hspace{0.38cm}$ {\scriptsize (by definition)}
	\item [ ]$=((\varphi_{M}\circ (H\otimes \phi_{M}))\otimes \varphi_{P})\circ (H\otimes ((A\otimes c_{H,M})\circ (c_{H,A}\otimes M))\otimes A\otimes P)\circ (((\phi_{H}\otimes \phi_{H})\circ \delta_{A\otimes H})$
	\item[ ]$\hspace{0.38cm}\otimes A\otimes M\otimes \phi_{P})\circ (A\otimes ((c_{A,H}\otimes c_{A,M})\circ (A\otimes c_{A,H}\otimes M))\otimes P) \circ (((A\otimes \delta_{A})\circ \delta_{A})\otimes H\otimes M\otimes P)$ 
	\item[ ]$\hspace{0.38cm}$ {\scriptsize (by the naturality of $c$ and the condition of coalgebra morphism for $\phi_{H}$ (see Lemma \ref{ic-c}))}
	\item [ ]$= ((\varphi_{M}\circ (\phi_{H}\otimes \phi_{M}))\otimes (\varphi_{P}\circ (\phi_{H}\otimes \phi_{P}))) \circ  (A\otimes ((H\otimes A\otimes c_{A,M}\otimes H\otimes A)\circ (H\otimes c_{A,A}\otimes c_{H,M}\otimes A)$
	\item[ ]$\hspace{0.38cm}\circ (c_{A,H}\otimes A\otimes H\otimes c_{A,M} ))\otimes  P)\circ (\delta_{A}\otimes ((c_{A,H}\otimes c_{A,H})\circ \delta_{A\otimes H}))\otimes M\otimes P)\circ (\delta_{A}\otimes H\otimes M\otimes P)$  
	\item[ ]$\hspace{0.38cm}$ {\scriptsize  (by the naturality of $c$ and $c_{H,A}\circ c_{A,H}=id_{A\otimes H}$)}
	\item [ ]$=((\varphi_{M}\circ (\phi_{H}\otimes \phi_{M}))\otimes (\varphi_{P}\circ (\phi_{H}\otimes \phi_{P}))) \circ (A\otimes ((c_{A,H}\otimes c_{A,M}\otimes H\otimes A)\circ (A\otimes c_{A,H}\otimes c_{H,M}\otimes A)$
	\item[ ]$\hspace{0.38cm}\circ ((c_{A,A}\circ \delta_{A})\otimes H\otimes H\otimes c_{A,M})\circ (A\otimes H\otimes c_{A,H}\otimes M )\circ (\delta_{A\otimes H}\otimes M))\otimes P)\circ (\delta_{A}\otimes H\otimes M\otimes P)${\scriptsize (by }
	\item[ ]$\hspace{0.38cm}$ {\scriptsize  the coassociativity of $\delta_{A}$ and the naturality of $c$)}
	\item [ ]$=((\varphi_{M}\circ (\phi_{H}\otimes \phi_{M}))\otimes (\varphi_{P}\circ (\phi_{H}\otimes \phi_{P})))\circ (A\otimes ((c_{A,H}\otimes c_{A,M}\otimes c_{A,H})\circ (A\otimes c_{A,H}\otimes c_{A,M}\otimes H)$
	\item[ ]$\hspace{0.38cm}\circ (A\otimes A\otimes c_{A,H}\otimes c_{H,M}))\otimes P)\circ (((\delta_{A}\otimes \delta_{A})\circ \delta_{A})\otimes \delta_{H}\otimes M\otimes P)$
	{\scriptsize (by the cocommutativity, the }
	\item[ ]$\hspace{0.38cm}$ {\scriptsize  coassociativity of $\delta_{A}$ and the naturality of $c$)}
	\item [ ]$= ((\varphi_M\circ (\phi_H\ot \phi_M)\circ (A\ot c_{A,H}\ot M)\circ (\delta_A\ot H\ot M))\otimes (\varphi_P\circ (\phi_H\ot \phi_P)\circ (A\ot c_{A,H}\ot P)$
	\item[ ]$\hspace{0.38cm}\circ (\delta_A\ot H\ot P)) )\circ (A\otimes H\otimes c_{A,M}\otimes H\otimes P)\circ (A\otimes c_{A,H}\otimes c_{H,M}\otimes P)\circ (\delta_{A}\otimes \delta_{H}\otimes M\otimes P)$ 
	\item[ ]$\hspace{0.38cm}$ {\scriptsize (by the naturality of $c$)}
	\item [ ]$=((\phi_M\circ (A\ot \varphi_M))\otimes (\phi_P\circ (A\ot \varphi_P) )\circ (A\otimes H\otimes c_{A,M}\otimes H\otimes P)\circ (A\otimes c_{A,H}\otimes c_{H,M}\otimes P)$
	\item[ ]$\hspace{0.38cm}\circ (\delta_{A}\otimes \delta_{H}\otimes M\otimes P)$ {\scriptsize (by (\ref{p-v}))}
	\item [ ]$=\phi_{M\otimes P}\circ (A\otimes \varphi_{M\otimes P}) $ {\scriptsize (by the naturality of $c$)}
    \end{itemize}
and, on the other hand, (\ref{eq-gamma}) follows by 
\begin{itemize}
	\item[ ]$\hspace{0.38cm}\varphi_{M\otimes P}\circ (\pi\ot \phi_{M\otimes P})\circ (\delta_A\ot \gamma\otimes \theta) $
	\item [ ]$= ((\varphi_{M}\circ (\pi\ot (\phi_{M}\circ (A\otimes \gamma))))\otimes (\varphi_{ P}\circ (\pi\otimes  (\phi_{P}\circ (A\otimes \theta)))))\circ (A\otimes ((A\otimes c_{A,N})\circ ((c_{A,A}\circ \delta_{A})$ 
	\item[ ]$\hspace{0.38cm}\otimes N))\otimes A\otimes Q)\circ (\delta_{A}\otimes c_{A,N}\otimes Q)\circ (\delta_{A}\otimes N\otimes Q)$ {\scriptsize (by the coalgebra morphism condition for $\pi$, the }
	\item[ ]$\hspace{0.38cm}$ {\scriptsize  coassociativity of $\delta_{A}$ and the naturality of $c$)}
	\item [ ]$=((\varphi_{M}\circ (\pi\ot (\phi_{M}\circ (A\otimes \gamma))))\otimes (\varphi_{ P}\circ (\pi\otimes  (\phi_{P}\circ (A\otimes \theta)))))\circ (\delta_{A}\otimes ((c_{A,N}\otimes A)\circ (A\otimes c_{A,N})$
	\item[ ]$\hspace{0.38cm}\circ (\delta_{A}\otimes N))\otimes Q)\circ (\delta_{A}\otimes N\otimes Q) $ {\scriptsize (by the cocommutativity and the coassociativity of $\delta_{A}$)}
	\item [ ]$=((\varphi_{M}\circ (\pi\ot (\phi_{M}\circ (A\otimes \gamma))\circ (\delta_{A}\otimes N)))\otimes (\varphi_{ P}\circ (\pi\otimes  (\phi_{P}\circ (A\otimes \theta))\circ (\delta_{A}\otimes Q))))$
	\item[ ]$\hspace{0.38cm}\circ (A\otimes c_{A,N}\otimes Q)\circ (\delta_{A}\otimes N\otimes Q) $ {\scriptsize (by the the naturality of $c$)}
	\item [ ]$=(\gamma\otimes \theta)\circ \phi_{N\otimes Q} $ {\scriptsize (by (\ref{eq-gamma})).}
\end{itemize}

Finally, by the cocommutativity condition and the symmetry condition,  it is easy to prove that $(c_{M,P}, c_{N,Q})$ is a morphism  in $\;_{(\pi, \phi_{H})}{\sf Mod}$ between $(M,N, \phi_M, \varphi_M,  \phi_N, \gamma)\otimes (P,Q, \phi_P, \varphi_P,  \phi_Q, \theta)$ and $(P,Q, \phi_P, \varphi_P,  \phi_Q, \theta)\otimes (M,N, \phi_M, \varphi_M,  \phi_N, \gamma)$. As a consequence, $\;_{(\pi, \phi_{H})}{\sf Mod}$ is symmetric.
\end{proof}

Following \cite{RGON} we recall the notion of left module over  a Hopf brace.

\begin{definition}
\label{l-mod}
{\rm Let ${\Bbb H}$ be a Hopf brace. A left ${\Bbb H}$-module is a triple $(M,\psi_{M}^{1}, \psi_{M}^{2})$, where $(M,\psi_{M}^{1})$ is a left $H_{1}$-module, $(M, \psi_{M}^{2})$  is a left $H_{2}$-module and the following identity 
\begin{equation}
\label{mod-l1}
\psi_{M}^{2}\co (H\ot \psi_{M}^{1})=\psi_{M}^1\co (\mu_{H}^{2}\ot \Gamma_{M})\co (H\ot c_{H,H}\ot M)\co (\delta_{H}\ot H\ot M)
\end{equation}
holds, where 
$$\Gamma_{M}=\psi_{M}^{1}\co (\lambda_{H}^1\ot \psi_{M}^{2})\co (\delta_{H}\ot M).$$
		
Given two left ${\Bbb H}$-modules  $(M,\psi_{M}^1, \psi_{M}^{2})$  and  $(N,\psi_{N}^1, \psi_{N}^{2})$, a morphism $f:M\rightarrow N$  is called a morphism of left ${\Bbb H}$-modules if  $f$ is a morphism of left $H_{1}$-modules and left $H_{2}$-modules. Left ${\Bbb H}$-modules  with morphisms of left ${\Bbb H}$-modules  form a category which we denote by $\;_{\Bbb H}${\sf Mod}. 
}
\end{definition}

\begin{example}
Let ${\Bbb H}$ be a Hopf brace. The triple $(H, \mu_{H}^{1}, \mu_{H}^{2})$ is an example of  left ${\Bbb H}$-module. Also, if $K$ is the unit object of ${\sf C}$, $(K, \psi_{K}^1=\varepsilon_{H}, \psi_{K}^2=\varepsilon_{H})$ is a left ${\Bbb H}$-module called the trivial module.
		
Let $H=(H, \eta_{H}, \mu_{H},  \varepsilon_{H}, \delta_{H}, \lambda_{H})$ be a Hopf algebra. Then $(H,\mu_{H}, \mu_{H})$ is an example of left ${\Bbb H}$-module for the Hopf brace ${\Bbb H}$ with $H_{1}=H_{2}=H$. Also, if $(M, \psi_{M})$ is a left $H$-module, the triple $(M, \psi_{M}, \psi_{M})$ is a left ${\Bbb H}$-module for the same Hopf brace. Then, there exists an obvious  functor $J:_{\sf H}${\sf Mod}$\;\rightarrow $ $_{\Bbb H}${\sf Mod} defined on objects by $J((M, \psi_{M}))=(M, \psi_{M}, \psi_{M})$ and by the identity on morphisms. Also, there exists a functor $L:_{\Bbb H}${\sf Mod}$\;\rightarrow $ $_{\sf H}${\sf Mod} defined on objects by $L((M, \psi_{M}^1, \psi_{M}^{2}))=(M, \psi_{M}^1)$ and by the identity on morphisms. Obviously, $L\circ J={\sf id}_{_H{\sf Mod}}.$

\end{example}

\begin{remark}
As was pointed in \cite{RGON}, Definition \ref{l-mod} is weaker than the one introduced by H. Zhu in \cite{Zhu}. For this author, if ${\mathbb H}$ is a Hopf brace, a left ${\Bbb H}$-module is a triple $(M,\psi_{M}^1, \psi_{M}^2)$, where $(M,\psi_{M}^1)$ is a left $H_{1}$-module, $(M, \psi_{M}^2)$  is a left $H_{2}$-module, and the equalities (\ref{mod-l1}) and 
\begin{equation}
\label{ch}
(\psi_{M}^{2}\ot H)\co (H\ot c_{H,M})\co (\delta_{H}\ot M)=(\psi_{M}^1\ot H)\co (H\ot c_{H,M})\co (\delta_{H}\ot \Gamma_{M})\co (\delta_{H}\ot M)
\end{equation}
hold (see \cite[Definition 3.1, Lemma 3.2]{Zhu}). Thus, for an arbitrary Hopf brace ${\Bbb H}$, a left ${\Bbb H}$-module in the sense of Zhu is a left ${\Bbb H}$-module in our sense. Moreover, if ${\Bbb H}$ is cocommutative, (\ref{ch}) hold for any left ${\Bbb H}$-module as in Definition \ref{l-mod}. As a consequence, in the cocommutative setting, \cite[Definition 3.1]{Zhu} and Definition \ref{l-mod}  are equivalent.  Moreover, as was pointed in \cite{Proj23}, if we use  Definition \ref{l-mod}, trivially, $(H,\mu_{H}^1, \mu_{H}^2)$ is a left ${\mathbb H}$-module but, if we work with the definition introduced by Zhu, the condition of left ${\mathbb H}$-module for  $(H,\mu_{H}^1, \mu_{H}^2)$ implies that  the following identity
$$
(\Gamma_{H_1}\otimes H)\circ (H\otimes c_{H,H})\circ (\delta_{H}\otimes H)=(\Gamma_{H_1}\otimes H)\circ (H\otimes c_{H,H})\circ ((c_{H,H}\circ\delta_{H})\otimes H)
$$
holds. Therefore, if {\sf C} is  symmetric, for example the category of vector spaces over a fiel ${\mathbb K}$, $(H, \Gamma_{H_1})$ is in the cocommutativity class of $H$ (see \cite{CCH} for the definition). In other words, under certain circumstances, for example, the lack of cocommutativity, the category of left modules over a Hopf brace introduced by Zhu may not contain the trivial object $(H,\mu_{H}^1, \mu_{H}^2).$

\end{remark}

\begin{remark}
\label{yo} 
It is easy to show that  (\ref{mod-l1}) is equivalent to 
$$
\psi_{M}^{2}\co (H\ot \psi_{M}^{1})=\psi_{M}^1\co (\Gamma_{H_1}^{\prime}\ot \psi_{M}^{2})\co (H\ot c_{H,H}\ot M)\co (\delta_{H}\ot H\ot M).
$$
and, by \cite[Lemma 2.11]{Proj23}, the following equality holds:
\begin{equation}
	\label{GMH1}
	\Gamma_{M}\circ (H\otimes \psi_{M}^1)=\psi_{M}^1\circ (\Gamma_{H_1}\otimes \Gamma_{M})\circ (H\otimes c_{H,H}\otimes M)\circ (\delta_{H}\otimes H\otimes M).
\end{equation}

Moreover,  $(M, \Gamma_{M})$ is a left $H_{2}$-module.

Finally, if {\sf C} is symmetric with natural isomorphism of symmetry $c$ and  ${\mathbb H}$ is a cocommutative Hopf brace in {\sf C},   the category of left modules over ${\mathbb H}$ is symmetric monoidal with unit object the trivial left module over the ${\mathbb H}$ (see \cite[Theorem 2.28]{Proj23}).

\end{remark}

\begin{theorem}
\label{th-mon-cat21}
Let ${\mathbb H}$ be a Hopf brace and let ${\sf E}({\mathbb H})$ be the invertible 1-cocycle induced by the  functor {\sf E} introduced in the proof of Theorem \ref{1-th}. There  exists a functor 
$${\sf G}_{{\mathbb H}}:\;_{{\mathbb H}}{\sf Mod}\;\rightarrow \;_{(id_{H}, \Gamma_{H_1})}{\sf Mod}$$ 
defined on objects by 
$${\sf G}_{{\mathbb H}}((M, \psi_{M}^1, \psi_{M}^2))=
(M,M, \widehat{\phi}_{M}=\Gamma_{M}, \widehat{\varphi}_{M}=\psi_{M}^1,
 \overline{\phi}_{M}=\psi_{M}^2, id_{M})$$
and on morphisms by ${\sf G}_{{\mathbb H}}(f)=(f,f)$.
\end{theorem}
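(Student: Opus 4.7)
The plan is to verify directly the three defining conditions (i), (ii), (iii) of Definition \ref{def-pi-module} for the 6-tuple $(M,M,\Gamma_M,\psi_M^1,\psi_M^2,id_M)$, interpreted with $A=H_2$, $H=H_1$, $\phi_H=\Gamma_{H_1}$ and $\pi=id_H$ (as prescribed by the functor {\sf E}), and then check that the assignment $f\mapsto (f,f)$ on morphisms lands in the correct category and respects composition.

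For (i), the pair $(M,\psi_M^1)$ is a left $H_1$-module by hypothesis, and by Remark \ref{yo} the pair $(M,\Gamma_M)$ is a left $H_2$-module. The compatibility (\ref{p-v}) becomes in this situation
\[\Gamma_M\circ (H\ot \psi_M^1) = \psi_M^1\circ (\Gamma_{H_1}\ot \Gamma_M)\circ (H\ot c_{H,H}\ot M)\circ (\delta_H\ot H\ot M),\]
which is precisely the identity (\ref{GMH1}) recalled in Remark \ref{yo}. Condition (ii) is immediate since $(M,\psi_M^2)$ is a left $H_2$-module by assumption.

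The delicate point is condition (iii). With $\gamma=id_M$ and $\pi=id_H$, the identity (\ref{eq-gamma}) reduces to
\[\psi_M^2 = \psi_M^1\circ (H\ot \Gamma_M)\circ (\delta_H\ot M),\]
and this is the only non-routine calculation. To establish it I would compose the defining identity (\ref{mod-l1}) of a left $\mathbb{H}$-module with $H\ot \eta_H\ot M$ on the right. On the left-hand side the unit condition $\psi_M^1\circ (\eta_H\ot M)=id_M$ collapses the expression to $\psi_M^2$. On the right-hand side, the equality $c_{H,H}\circ (H\ot \eta_H) = \eta_H\ot H$ (a consequence of $c_{H,K}=id_H$) together with $\mu_H^2\circ (H\ot \eta_H)=id_H$ reduces the expression to $\psi_M^1\circ (H\ot \Gamma_M)\circ (\delta_H\ot M)$, as desired.

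For the behaviour on morphisms, let $f\colon M\to M'$ be a morphism of left $\mathbb{H}$-modules, so that $f$ is linear with respect to both $\psi_M^1$ and $\psi_M^2$. Since $\Gamma_M=\psi_M^1\circ (\lambda_H^1\ot \psi_M^2)\circ (\delta_H\ot M)$, the morphism $f$ is automatically linear for $\Gamma_M$ as well. Thus $(f,f)$ satisfies all the requirements to be a morphism in $\;_{(id_{H},\Gamma_{H_1})}{\sf Mod}$, and condition (\ref{fg-g}) is trivial since both $\gamma$ morphisms are identities. Preservation of identities and composition is then obvious, so ${\sf G}_{\mathbb H}$ is a well-defined functor.
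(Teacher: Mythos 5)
Your proposal is correct, and its overall skeleton coincides with the paper's: identify $A=H_2$, $H=H_1$, $\phi_H=\Gamma_{H_1}$, $\pi=id_H$, quote Remark \ref{yo} (i.e.\ \cite[Lemma 2.11]{Proj23}) for the $H_2$-module structure of $(M,\Gamma_M)$ and the identity (\ref{GMH1}) for condition (\ref{p-v}), and observe that $\Gamma_M$-linearity of a morphism follows from its $\psi^1_M$- and $\psi^2_M$-linearity. The only genuine divergence is in the verification of (\ref{eq-gamma}), i.e.\ of $\psi_M^2=\psi_M^1\circ(H\ot\Gamma_M)\circ(\delta_H\ot M)$. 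The paper proves it by expanding $\Gamma_M$, using the left $H_1$-module axiom to produce $\psi_M^1\circ\bigl((id_H\ast\lambda_H^1)\ot\psi_M^2\bigr)\circ(\delta_H\ot M)$ and then collapsing via the antipode identity (\ref{antipode}) and the (co)unit axioms; this route uses only the $H_1$-module structure and the antipode, not the brace compatibility. You instead specialize the defining compatibility (\ref{mod-l1}) by composing with $H\ot\eta_H\ot M$, using $c_{H,H}\circ(H\ot\eta_H)=\eta_H\ot H$ and $\mu_H^2\circ(H\ot\eta_H)=id_H$; this is also valid, but note that it silently uses the equality $\eta_H^1=\eta_H^2$ (stated earlier in the paper) so that the same unit serves both the $\psi_M^1$-unit axiom on the left-hand side and the $\mu_H^2$-unit axiom on the right-hand side — you should make that explicit. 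With that remark added, your argument is complete; the paper's version has the mild advantage of showing that (\ref{eq-gamma}) is forced by the module structures alone, independently of (\ref{mod-l1}).
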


\begin{proof}
By \cite[Lemma 2.11]{Proj23} we know that $(M, \widehat{\phi}_{M}=\Gamma_{M})$ is a left $H_{2}$-module and,  by assumption, $(M, \widehat{\varphi}_{M}=\psi_{M}^1)$ is a left $H_{1}$-module and $(M, \overline{\phi}_{M}=\psi_{M}^2)$ is a left $H_{2}$-module. On the other hand, by (\ref{GMH1}) we have that 
$$\widehat{\phi}_{M}\circ (H\otimes \widehat{\varphi}_{M})=\widehat{\varphi}_{M}\circ (\Gamma_{H_1}\otimes \widehat{\phi}_{M})\circ (H\otimes c_{H,H}\otimes M)\circ (\delta_{H}\otimes H\otimes M)$$
and then, (\ref{p-v}) holds. Also, 
\begin{itemize}
	\item[ ]$\hspace{0.38cm} \widehat{\varphi}_{M}\circ (H\ot \widehat{\phi}_{M})\circ (\delta_H\ot M)$
	\item [ ]$=\psi_{M}^1\circ (H\ot \Gamma_{M})\circ (\delta_H\ot M) $ {\scriptsize (by definition of $ \widehat{\varphi}_{M}$ and $\widehat{\phi}_{M}$)}
	\item [ ]$=\psi_{M}^1\circ (H\ot (\psi_{M}^{1}\co (\lambda_{H}^1\ot \psi_{M}^{2})\co (\delta_{H}\ot M)))\circ (\delta_{H}\otimes M) $ {\scriptsize (by definition of $\Gamma_{M}$)}
	\item [ ]$=\psi_{M}^1\circ ((id_{H}\ast \lambda_{H}^{1})\otimes \psi_{M}^2)\circ (\delta_{H}\otimes M) $ {\scriptsize (by the condition of left $H_{1}$-module of $(M, \psi_{M}^1)$ and the coassociativity}
	\item[ ]$\hspace{0.38cm}$ {\scriptsize  of $\delta_{H}$)}
	\item [ ]$=\overline{\phi}_{M}$ {\scriptsize (by (\ref{antipode}), the counit properties, the condition of left $H_{1}$-module of $(M, \psi_{M}^1)$ and the definition of $ \overline{\phi}_{M}$).}
\end{itemize}

Finally, it is easy to show that if $f$ is a morphism in $\;_{{\mathbb H}}{\sf Mod}$ between the objects $(M, \psi_{M}^1, \psi_{M}^2)$ and $(M^{\prime}, \psi_{M^{\prime}}^1, \psi_{M^{\prime}}^2)$, the pair $(f,f)$ is a morphism in $\;_{(id_{H}, \Gamma_{H_1})}{\sf Mod}$ between ${\sf G}_{{\mathbb H}}((M, \psi_{M}^1, \psi_{M}^2))$ and ${\sf G}_{{\mathbb H}}((M^{\prime}, \psi_{M^{\prime}}^1, \psi_{M^{\prime}}^2))$.
\end{proof}

\begin{theorem}
\label{pHpi}
Let $\pi:A\to H$ be an invertible 1-cocycle. Then the categories $_{(\pi, \phi_{H})}{\sf Mod}$ and $\;_{{\Bbb H}_{\pi}}${\sf Mod} are equivalent.
\end{theorem}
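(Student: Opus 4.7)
The strategy is to factor the desired equivalence through the isomorphism of categories ${\sf M}_{(\pi^{-1},id_H)}: {}_{(\pi,\phi_H)}{\sf Mod} \to {}_{(id_H,\Gamma_H)}{\sf Mod}$ noted in the remark after Theorem \ref{modfg}, reducing the problem to showing ${}_{(id_H,\Gamma_H)}{\sf Mod} \simeq {}_{{\mathbb H}_\pi}{\sf Mod}$. For the Hopf brace ${\mathbb H}_\pi = (H,H_\pi)$ the morphism $\Gamma_{H_1}$ of Theorem \ref{th-mon-cat21} equals the action $\Gamma_H$ from the construction of ${\sf E}({\mathbb H}_\pi) = id_H$ in the proof of Theorem \ref{1-th}, so the functor ${\sf G}_{{\mathbb H}_\pi}$ already lands in ${}_{(id_H,\Gamma_H)}{\sf Mod}$, and it suffices to exhibit a quasi-inverse.

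I would define ${\sf U}: {}_{(id_H,\Gamma_H)}{\sf Mod} \to {}_{{\mathbb H}_\pi}{\sf Mod}$ on objects by
\[
{\sf U}(M,N,\phi_M,\varphi_M,\phi_N,\gamma) = (M,\,\varphi_M,\,\gamma\circ\phi_N\circ(H\otimes\gamma^{-1}))
\]
and on morphisms by $(h,l) \mapsto h$. The two actions give respectively an $H_1 = H$-module and an $H_2 = H_\pi$-module structure on $M$ (the latter because $A$ and $H_\pi$ have the same underlying object and $\gamma$ is an isomorphism). The main technical point is to check the Hopf-brace compatibility (\ref{mod-l1}) for this triple. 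I plan to proceed by first using (\ref{eq-gamma}) specialized to $\pi = id_H$ to rewrite $\gamma\circ\phi_N$ as $\varphi_M\circ(H\otimes\phi_M)\circ(\delta_H\otimes\gamma)$, then applying (\ref{p-v}) to the resulting composite $\phi_M\circ(H\otimes\varphi_M)$ so as to bring in $\Gamma_H$, and finally invoking the identity $\mu_{H_\pi} = \mu_H\circ(H\otimes\Gamma_H)\circ(\delta_H\otimes H)$ coming from (\ref{eb2}) to recognize the right-hand side of (\ref{mod-l1}), with $\Gamma_M$ equal to $\phi_M$. This last fusion is where I expect the principal obstacle.

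Once ${\sf U}$ is in place, one composition ${\sf U}\circ {\sf G}_{{\mathbb H}_\pi}$ is directly the identity from the formulas. For the other, on each object of ${}_{(id_H,\Gamma_H)}{\sf Mod}$ the pair $(id_M,\gamma^{-1})$ furnishes a natural isomorphism ${\sf G}_{{\mathbb H}_\pi}\circ {\sf U} \to {\rm id}$; the naturality square reduces, using (\ref{fg-g}), to the tautology $l = \gamma'^{-1}\circ h\circ\gamma$, and the only non-routine axiom on each fibre collapses to the identity
\[
\phi_M \;=\; \varphi_M\circ(\lambda_H\otimes(\gamma\circ\phi_N))\circ(\delta_H\otimes\gamma^{-1}),
\]
which is precisely (\ref{req-g2}) specialized to $\pi = id_H$. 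The desired equivalence ${}_{(\pi,\phi_H)}{\sf Mod} \simeq {}_{{\mathbb H}_\pi}{\sf Mod}$ is then the composition ${\sf U}\circ {\sf M}_{(\pi^{-1},id_H)}$, with quasi-inverse ${\sf M}_{(\pi,id_H)}\circ {\sf G}_{{\mathbb H}_\pi}$.
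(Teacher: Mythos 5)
Your proposal is correct and is essentially the paper's own argument: your composite ${\sf U}\circ {\sf M}_{(\pi^{-1},id_{H})}$ is exactly the functor ${\sf H}_{{\sf br}}^{\pi}$ that the paper defines directly (sending $(M,N,\phi_M,\varphi_M,\phi_N,\gamma)$ to $(M,\varphi_M,\gamma\circ\phi_N\circ(\pi^{-1}\otimes\gamma^{-1}))$), and your quasi-inverse ${\sf M}_{(\pi,id_{H})}\circ{\sf G}_{{\mathbb H}_{\pi}}$ together with the comparison isomorphism built from $\gamma$ coincide with the paper's. The only difference is organizational: you check the compatibility (\ref{mod-l1}) after transporting to the case $\pi=id_{H}$ via the isomorphism ${\sf M}_{(\pi^{-1},id_{H})}$, whereas the paper performs the identical computation (using (\ref{req-g1}), (\ref{p-v}), (\ref{1-c}) and the identity $\overline{\Gamma}_{M}=\phi_M\circ(\pi^{-1}\otimes M)$) for general $\pi$.
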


\begin{proof} First of all we will prove that there  exists a functor 
$${\sf H}_{{\sf br}}^{\pi}:\;_{(\pi, \phi_{H})}{\sf Mod}\;\rightarrow \;_{{\mathbb H}_{\pi}}{\sf Mod}$$ 
defined on objects by 
$${\sf H}_{{\sf br}}^{\pi}((M,N, \phi_M, \varphi_M,  \phi_N, \gamma))=
(M,\overline{\psi}_{M}^{1}=\varphi_M, \overline{\psi}_{M}^{2}=\gamma\circ \phi_N\circ (\pi^{-1}\otimes \gamma^{-1}))$$
and on morphisms by ${\sf H}_{{\sf br}}^{\pi}((h,l))=h$. Indeed: By assumption, $(M,\overline{\psi}_{M}^{1}=\varphi_M)$ is a left $H$-module and, using the condition of left $A$-module of $N$, we obtain that $(M, \overline{\psi}_{M}^{2}=\gamma\circ \phi_N\circ (\pi^{-1}\otimes \gamma^{-1}))$ is a left $H_{\pi}$-module. Also, by (\ref{req-g2}), we have that the identity 
\begin{equation}
\label{pHpi1}
\phi_{M}\circ (\pi^{-1}\otimes M)=\overline{\Gamma}_{M}
\end{equation}
holds, where $\overline{\Gamma}_{M}=\overline{\psi}_{M}^{1}\circ (\lambda_{H}\otimes \overline{\psi}_{M}^{2})\circ (\delta_{H}\otimes M)$. Then, (\ref{mod-l1}) holds because:
\begin{itemize}
	\item[ ]$\hspace{0.38cm} \overline{\psi}_{M}^{2}\circ (H\otimes \overline{\psi}_{M}^{1})$
	\item [ ]$= \varphi_{M}\circ (\pi\otimes\phi_{M})\circ ((\delta_{A}\circ \pi^{-1})\otimes \varphi_{M})$ {\scriptsize (by (\ref{req-g1}))}
	\item [ ]$= \varphi_{M}\circ (\pi\otimes (\varphi_{M}\circ (\phi_{H}\otimes \phi_{M})\circ (A\otimes c_{A,H}\otimes M)\circ (\delta_{A}\otimes H\otimes M)))\circ ((\delta_{A}\circ \pi^{-1})\otimes H\otimes M) $ 
	\item[ ]$\hspace{0.38cm}${\scriptsize (by (\ref{p-v}))}
	\item [ ]$=\varphi_{M}\circ  ((\mu_{H}\circ (\pi\otimes \phi_{H})\circ (\delta_{A}\otimes \pi))\otimes \phi_{M})\circ (A\otimes c_{A,A}\otimes M)\circ ((\delta_{A}\circ \pi^{-1})\otimes \pi^{-1}\otimes M) $ {\scriptsize (by the }
	\item[ ]$\hspace{0.38cm}${\scriptsize condition of left $H$-module for $M$, the coassociativity of $\delta_{A}$, the naturality of $c$ and the condition of isomorphism}
	\item[ ]$\hspace{0.38cm}${\scriptsize  for $\pi$)}
	\item [ ]$=\varphi_{M}\circ  ((\pi\circ \mu_{A})\otimes \phi_{M})\circ (A\otimes c_{A,A}\otimes M)\circ ((\delta_{A}\circ \pi^{-1})\otimes \pi^{-1}\otimes M) $ {\scriptsize (by (\ref{1-c}))}
	\item [ ]$=\varphi_{M}\circ  (\mu_{H_{\pi}}\otimes (\phi_{M}\circ (\pi^{-1}\otimes M)))\circ (H\otimes c_{H,H}\otimes M)\circ (\delta_{H}\otimes H\otimes M) $ {\scriptsize (by the condition of coalgebra}
	\item[ ]$\hspace{0.38cm}$  {\scriptsize  isomorphism for $\pi$ and the naturality of $c$)}
	\item [ ]$=\overline{\psi}_{M}^{1}\circ (\mu_{H_{\pi}}\otimes \overline{\Gamma}_{M})\circ (H\otimes c_{H,H}\otimes M)\circ (\delta_{H}\otimes H\otimes M)$ {\scriptsize (by (\ref{pHpi1}))}
\end{itemize}

On the other hand, if $(h,l)$ is a morphisms in $\;_{(\pi, \phi_{H})}{\sf Mod}$ between \((M,N, \phi_M, \varphi_M,  \phi_N, \gamma)\) and 
\((M^{\prime},N^{\prime}, \phi_{M^{\prime}}, \varphi_{M^{\prime}},  \phi_{N^{\prime}}, \gamma^{\prime})\), we have that $h$ is a morphism in $\;_{{\mathbb H}_{\pi}}{\sf Mod}$  between $(M,\overline{\psi}_{M}^{1}, \overline{\psi}_{M}^{2})$ and $(M^{\prime},\overline{\psi}_{M^{\prime}}^{1}, \overline{\psi}_{M^{\prime}}^{2})$ because, using that $h$ is a morphism of left $H$-modules, we have $h\circ \overline{\psi}_{M}^{1}=\overline{\psi}_{M^{\prime}}^{1}\circ (H\otimes h)$ and, by (\ref{fg-g}) and the condition of morphism of left $A$-modules for $h$, we have that $h\circ \overline{\psi}_{M}^{2}=\overline{\psi}_{M^{\prime}}^{2}\circ (H\otimes h)$.

Taking into account the functors ${\sf H}_{{\sf br}}^{\pi}$, ${\sf G}_{{\mathbb H}_{\pi}}$ and ${\sf M}_{(\pi,id_{H})}$, it is easy to show that $${\sf H}_{{\sf br}}^{\pi}\circ ({\sf M}_{(\pi,id_{H})}\circ {\sf G}_{{\mathbb H}_{\pi}})	={\sf id}_{\;_{{\Bbb H}_{\pi}}{\sf Mod}}$$
and
$$(({\sf M}_{(\pi,id_{H})}\circ {\sf G}_{{\mathbb H}_{\pi}})\circ {\sf H}_{{\sf br}}^{\pi})((M,N, \phi_M, \varphi_M,  \phi_N, \gamma))=
(M,M,\phi_{M}, \varphi_{M},\overline{\phi}_{M}^{\pi}=\gamma\circ \phi_{N}\circ (A\otimes \gamma^{-1}), id_{M})$$
hold. Then, 
$$({\sf M}_{(\pi,id_{H})}\circ {\sf G}_{{\mathbb H}_{\pi}})\circ {\sf H}_{{\sf br}}^{\pi}\backsimeq {\sf id}_{_{(\pi, \phi_{H})}{\sf Mod}}$$
because $(id_{M}, \gamma)$ is an isomorphism in $_{(\pi, \phi_{H})}{\sf Mod}$ between the objects $(M,N, \phi_M, \varphi_M,  \phi_N, \gamma)$ and $(M,M,\phi_{M}, \varphi_{M},\overline{\phi}_{M}^{\pi}, id_{M}).$

\end{proof}

As a consequence of this result,  we have the following corollary whose proof is an immediate consequence of the preceding theorems.

\begin{corollary}
\label{crpHpi}
Let ${\mathbb H}$ be a Hopf brace. Then, the categories $_{(id_{H}, \Gamma_{H_{1}})}{\sf Mod}$ and $\;_{{\mathbb H}}${\sf Mod} are equivalent.
\end{corollary}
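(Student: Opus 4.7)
The plan is to derive this directly from Theorem \ref{pHpi} by specializing the invertible 1-cocycle to the one canonically associated to ${\mathbb H}$. First, recall from the proof of Theorem \ref{1-th} that the functor ${\sf E}:{\sf HBr}\rightarrow {\sf IC}$ sends the Hopf brace ${\mathbb H}$ to the invertible 1-cocycle $id_{H}:H_{2}\rightarrow H_{1}$ equipped with the action $\phi_{H_{1}}=\Gamma_{H_{1}}$, and that ${\sf Q}{\sf E}={\sf id}_{{\sf HBr}}$. In particular, if we form the Hopf brace ${\mathbb H}_{\pi}$ associated to $\pi=id_{H}:H_{2}\rightarrow H_{1}$ via the functor ${\sf Q}$, we recover exactly ${\mathbb H}_{\pi}={\mathbb H}$ (since $\mu_{H_{\pi}}=id_{H}\circ\mu_{H_{2}}\circ (id_{H}\otimes id_{H})=\mu_{H}^{2}$ and similarly for the units and the second antipode).

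Next I would apply Theorem \ref{pHpi} to this particular invertible 1-cocycle $\pi=id_{H}$ with underlying Hopf algebras $A=H_{2}$, $H=H_{1}$ and action $\phi_{H}=\Gamma_{H_{1}}$. The theorem furnishes an equivalence of categories
\[
\;_{(id_{H}, \Gamma_{H_{1}})}{\sf Mod}\;\simeq\; \;_{{\mathbb H}_{id_{H}}}{\sf Mod},
\]
given explicitly by the functor ${\sf H}_{{\sf br}}^{id_{H}}$ in one direction and ${\sf M}_{(id_{H},id_{H})}\circ {\sf G}_{{\mathbb H}_{id_{H}}}={\sf G}_{{\mathbb H}}$ in the other (the restriction functor along the identity cocycle morphism is the identity, so in this special case the quasi-inverse is just ${\sf G}_{{\mathbb H}}$ of Theorem \ref{th-mon-cat21}). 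Combining with the identification ${\mathbb H}_{id_{H}}={\mathbb H}$ from the previous paragraph yields the desired equivalence
\[
\;_{(id_{H}, \Gamma_{H_{1}})}{\sf Mod}\;\simeq\; \;_{{\mathbb H}}{\sf Mod}.
\]

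There is no essential obstacle here: the content is that the two constructions fit together consistently under ${\sf Q}{\sf E}={\sf id}_{{\sf HBr}}$. The only point worth double-checking carefully is the bookkeeping of which action plays the role of $\phi_{H}$ and which of the products of ${\mathbb H}$ corresponds to $\mu_{H_{\pi}}$ versus $\mu_{H}$ when $\pi=id_{H}$; once this is matched as above, the corollary follows immediately from Theorems \ref{th-mon-cat21} and \ref{pHpi}.
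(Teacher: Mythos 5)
Your proof is correct and follows exactly the route the paper intends: the paper states the corollary is an immediate consequence of the preceding theorems, i.e., applying Theorem \ref{pHpi} to the cocycle ${\sf E}({\mathbb H})=id_{H}:H_{2}\rightarrow H_{1}$ with action $\Gamma_{H_{1}}$ and using ${\sf QE}={\sf id}_{{\sf HBr}}$ to identify ${\mathbb H}_{id_{H}}$ with ${\mathbb H}$. Your additional bookkeeping (that ${\sf M}_{(id_{H},id_{H})}$ is the identity, so the quasi-inverse reduces to ${\sf G}_{{\mathbb H}}$ of Theorem \ref{th-mon-cat21}) is accurate and just makes explicit what the paper leaves implicit.
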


\section*{Funding}
The  authors were supported by  Ministerio de Ciencia e Innovaci\'on of Spain. Agencia Estatal de Investigaci\'on. Uni\'on Europea - Fondo Europeo de Desarrollo Regional (FEDER). Grant PID2020-115155GB-I00: Homolog\'{\i}a, homotop\'{\i}a e invariantes categ\'oricos en grupos y \'algebras no asociativas.

\bibliographystyle{amsalpha}

\end{document}